\documentclass[11pt]{amsart}
\usepackage{amsmath} \allowdisplaybreaks[4] 
\usepackage{epsfig}
\usepackage{color}
\usepackage{verbatim}
\usepackage{graphicx,amssymb,amsmath,amsthm}
\usepackage{mathrsfs}
\usepackage{hyperref} 

\usepackage{amssymb}
\usepackage{enumerate}
\textheight 8.5in
\textwidth 6.00in
\topmargin 0.0in
\oddsidemargin 0.25in
\evensidemargin 0.25in

\newcommand{\vx}{\boldsymbol x}
\newcommand{\vh}{\boldsymbol h}
\newcommand{\ve}{\boldsymbol e}
\newcommand{\vy}{\boldsymbol y}
\newcommand{\vz}{\boldsymbol z}

\newcommand{\veta}{\boldsymbol \eta}

\newcommand{\dist}{{\rm dist}}
\newcommand{\cA}{\mathcal A}
\newcommand{\vA}{{\boldsymbol A}}
\newcommand{\vF}{\mathbb F}

\newcommand{\R}{{\mathbb R}}
\newcommand{\abs}[1]{\lvert#1\rvert}

\newtheorem{prop}{Proposition}[section]

\newtheorem{theorem}[prop]{Theorem}
\newtheorem{remark}[prop]{Remark}
\newtheorem{lem}[prop]{Lemma}
\newtheorem{defi}[prop]{Definition}
\newtheorem{corollary}[prop]{Corollary}

\usepackage{amsmath}
\allowdisplaybreaks[4]

\begin{document}
\bibliographystyle{plain}

\title { Instance optimality in phase retrieval }

\author{Yu Xia}
\thanks {Yu Xia was supported by NSFC grant (12271133, U21A20426, 11901143) and the key project of Zhejiang Provincial Natural Science Foundation grant (LZ23A010002)}
\address{Department of Mathematics, Hangzhou Normal University, Hangzhou 311121, China}
\email{yxia@hznu.edu.cn}

\subjclass[2020]{Primary 94A15, 46C05; Secondary 94A12, 49N45}

\keywords{Phase retrieval, instance optimality, bi-Lipschitz condition.}

\author{Zhiqiang Xu}
\thanks {Zhiqiang Xu is supported  by
the National Science Fund for Distinguished Young Scholars (12025108), NSFC (12471361, 12021001, 12288201) and National Key R\&D Program of China (2023YFA1009401). }
\address{LSEC, ICMSEC, Academy of Mathematics and Systems Science, Chinese Academy of Sciences, Beijing 100190, China;\newline
School of Mathematical Sciences, University of Chinese Academy of Sciences, Beijing 100049, China}
\email{xuzq@lsec.cc.ac.cn}

\maketitle

\begin{abstract}
 Compressed sensing has demonstrated that a general signal $\vx \in \mathbb{F}^n$ ($\mathbb{F}\in \{\mathbb{R},\mathbb{C}\}$)
  can be estimated from few linear measurements with an error  {proportional to} the best $k$-term approximation error, a property known as instance optimality. 
    In this paper, we investigate instance optimality in the context of phaseless measurements using the $\ell_p$-minimization decoder, where $p \in (0, 1]$, for both real and complex cases. More specifically, we prove that $(2,1)$ and $(1,1)$-instance optimality of order $k$ can be achieved with $m = O(k \log(n/k))$ phaseless measurements, paralleling results from linear measurements. These results imply that one can stably recover approximately $k$-sparse signals from $m = O(k \log(n/k))$ phaseless measurements.
    Our approach leverages the phaseless bi-Lipschitz condition. 
   Additionally, we present a non-uniform version of $(2,2)$-instance optimality result in probability applicable to any fixed vector $\vx \in \mathbb{F}^n$. 
   These findings reveal striking parallels between compressive phase retrieval and classical compressed sensing, enhancing our understanding of both phase retrieval and instance optimality.

\end{abstract}
\section{introduction}

Assume that the target signal is $\vx=(x_1,\ldots,x_n)\in \vF^n$ where $\vF\in \{{\mathbb R}, {\mathbb C}\}$.
It is often assumed that $\vx$ belongs to or is close to the vectors in $k$-sparsity set:
 \begin{equation}
 \label{eqn: sigma_k}
 \Sigma_k:=\{\vz\in \vF^n: \|\vz\|_0\leq k\},
 \end{equation}
 where $k<n$.
  A widely used method for measuring the distance between $\vx$ and $\Sigma_k$
  is the {\em best $k$-term approximation error}, defined as
 \[
 \sigma_k(\vx)_q:=\min_{\vz\in \Sigma_k}\|\vz-\vx\|_q.
 \]
 Here, $q>0$ and $\|\cdot \|_q$   represents the  $\ell_q$ (quasi-)norm.
 
 We consider the encoder $F: \mathbb{F}^n \rightarrow \mathbb{R}^m $ as a mapping, where the information we gather about $\vx$ is represented by $ F({\vx}) $. A prominent topic in applied mathematics is the estimation of $\vx$ from $ F({\vx}) $. In particular, there is a keen interest in designing a decoder $ \Delta: \mathbb{R}^m \rightarrow \mathbb{F}^n $ such that $ \Delta(F(\vx))$ is as close to $\vx$ as possible.

We use $\cA_{n,m}$ to denote the set of encoder-decoder pairs $(F,\Delta)$. Following \cite{best},  for $q_1,q_2 >0$, we say that $(F,\Delta)\in \cA_{n,m}$ is {\em $(q_1,q_2)$-instance optimality of order $k$} if 
\begin{equation}\label{instance_def}
\|\vx-\Delta(F(\vx))\|_{q_1} \leq C k^{\frac{1}{q_1}-\frac{1}{q_2}}\sigma_k(\vx)_{q_2}\qquad \text{for all }\vx \in \vF^n.
\end{equation}
 Here, $C$ is a constant independent of $k$ and $n$. 
 The goal is to identify a pair $(F, \Delta) \in \mathcal{A}_{n,m}$ that fulfills equation (\ref{instance_def}) while minimizing the measurement number  $m$.

 Assume that  $\vA\in \vF^{m\times n}$  is the measurement matrix. 
 In compressed sensing, the scenario where $F(\vx) = \vA\vx$ represents linear measurements has been extensively studied  (see \cite{candes,best}).
  Additionally, instance optimality under nonlinear observations has been explored in recent research \cite{phase22, GWX, KG}. In this paper, we focus specifically on the case where $F({\vx}) = \abs{\vA\vx}$.
Here, the operator $\abs{\cdot}$ denotes the element-wise absolute value function applied to a vector, yielding another vector of the same dimension with non-negative elements.

The estimation of $\vx$ from $\abs{\vA\vx}$ is a well-established challenge in signal processing known as {\em phase retrieval}. This fundamental problem arises across a diverse array of scientific and engineering disciplines, including but not limited to X-ray crystallography, quantum tomography, electron microscopy, and audio signal processing. 

 {The primary objective of this paper is establishing the result of instance optimality to the domain of phaseless measurements, especially in the case where $\mathbb{F} = \mathbb{C}$, while simultaneously establishing the stable recovery of approximately $k$-sparse signals $\vx$ from noisy phaseless measurements.}

\subsection{Linear measurements }
To start, we will present the established results regarding instance optimality within the context of linear measurements.
 The $\ell_1$-minimization decoder, employed in the context of linear measurements and referred to as $\Delta^{\text{Linear}}_{1,\eta}$, is widely utilized and is defined as follows:
\[
\Delta^{\text{Linear}}_{1,\eta} (\vy):= \mathop{\rm argmin}\limits_{\vz\in \mathbb{F}^n} \{\|\vz\|_1 \ :\  \|\vA\vz -\vy\|_2\leq \eta\}.
\]
This is a numerically tractable decoder \cite{candes}. Stability results can be characterized through the concept of the  \emph{restricted isometry property} (RIP) of the matrix $\vA$. For $\delta\in (0,1)$ and a positive integer $k$, we say that the matrix $\vA \in {\vF}^{m \times n}$ satisfies the RIP$(k,\delta)$ if the following condition holds:
\begin{equation}\label{eq:rip}
(1-\delta)\|\vz\|_2\leq \|\vA\vz\|_2\leq (1+\delta)\|\vz\|_2,\quad \text {for all}\ \ \vz \in \Sigma_k.
\end{equation}
In particular, Cand\`es, Romberg, and  Tao \cite{candes} demonstrated that the pair $(\vA, \Delta^{\text{Linear}}_{1,\eta}) \in \mathcal{A}_{m,n}$ satisfies  the following inequality:
\begin{equation}\label{eq:ins210}
\|\Delta^{\text{Linear}}_{1,\eta}(\vA\vx+\ve)-\vx \|_2\leq C\cdot \frac{\sigma_k(\vx)_1}{\sqrt{k}}+D\cdot \eta,\quad \text {for all}\  \ \vx \in \vF^n,
\end{equation}
provided that  $\eta\geq \|\ve\|_2$ and that ${\vA}$ satisfies   RIP$(4k, \delta)$ with $\delta\in (0, 1/2)$. Here, $C$ and $D$ are universal positive constants depending on $\delta$. 

The stability result in (\ref{eq:ins210}) can likewise be leveraged to elucidate the behavior of $(2,1)$-instance optimality. More concretely, set $\Delta^{\text{Linear}}_1:=\Delta^{\text{Linear}}_{1,0}$, that is,
\[
\Delta^{\text{Linear}}_{1} (\vA \vx):= \mathop{\rm argmin}\limits_{\vz\in \mathbb{F}^n} \{\|\vz\|_1\ :\  \vA\vz =\vA \vx\}.
\]
Consequently,  (\ref{eq:ins210})  illustrates that the pair  $(\vA, \Delta^{\text{Linear}}_1)$ achieves $(2,1)$-instance optimality of order $k$, given that  $\vA$ satisfies  RIP$(4k, \delta)$ with $\delta\in (0,1/2)$.

The concept of instance optimality for linear measurements was systematically articulated by Cohen, Dahmen, and DeVore in \cite{best}, wherein they established the relationship between instance optimality and Gelfand width.  In \cite[Theorem 4.4]{best}, they further demonstrated that $({\vA}, \Delta^{\text{Linear}}_1)$
  satisfies $(1,1)$-instance optimality of order $k$ provided $\vA$ satisfies RIP($3k, \delta$) with $\delta\in (0,(\sqrt{2}-1)^2/3)$. 
  
    Certain families of random matrices, such as Gaussian random matrices, are known to satisfy the restricted isometry property RIP$(k, \delta)$ with high probability, provided that $m = O\left(k \log\left({en}/{k}\right) / \delta^2\right)$.
 Thus, by synthesizing this result with the aforementioned findings, we can deduce that for $m = O(k \log(en/k))$, there exists a pair $(\boldsymbol{A}, \Delta^{\text{Linear}}_1) \in \mathcal{A}_{m,n}$
that exhibits both $(2,1)$-instance optimality and $(1,1)$-instance optimality.

A more comprehensive approach for recovering the unknown sparse signal involves solving the following model $\Delta^{\text{Linear}}_{p,\eta}$ ($0<p\leq 1$):
 \begin{equation}\label{eqn: lp_linear}
\Delta^{\text{Linear}}_{p,\eta} (\vy)= \mathop{\rm argmin}\limits_{\vz\in \mathbb{F}^n} \{\|\vz\|_p \ :\  \|\vA\vz -\vy\|_2\leq \eta\}.
\end{equation}
A  numerical scheme for computing the solution to (\ref{eqn: lp_linear}) can be found in \cite{FoLai}. The stability result for $\Delta^{\text{Linear}}_{p,\eta}$ is detailed in \cite[Theorem 3.1]{FoLai}, which states:
\begin{equation}\label{eq:ins21}
\|\Delta^{\text{Linear}}_{p,\eta}(\vA\vx+\ve)-\vx \|_{q_1}\leq C\cdot \frac{\sigma_k(\vx)_{q_2}}{k^{1/q_2-1/q_1}}+D\cdot k^{1/q_1-1/2}\cdot \eta,\quad \text {for all}\  \ \vx \in \vF^n,
\end{equation}
provided that  $\eta\geq \|\ve\|_2$ and $\vA$ satisfies RIP$(2k+2,\delta)$ where $\delta$ is a constant dependent on $p$, as detailed in \cite{FoLai}.
 The pairs $(q_1,q_2)$  can be  either $(p,p)$ or $(2,p)$, where $0<p\leq 1$. Here $C$ and $D$ are  positive constants  depending on $\delta$ and $p$. 

The key advantage of $\Delta^{\text{Linear}}_{p,\eta}$ is its ability to achieve a better RIP constant $\delta$ compared to  the model $\Delta^{\text{Linear}}_{1,\eta}$.  Optimal bounds for the  RIP constant $\delta$ in relation to $\Delta^{\text{Linear}}_{p,\eta}$ are discussed in \cite{Zhang}. Additionally,  when $\eta=0$, the results indicate  $(p,p)$-instance optimality and $(2,p)$-instance optimality. Notably, when $p=1$, this reduces to showing $(1,1)$-instance optimality and $(2,1)$-instance optimality for $\Delta_1^{\text{Linear}}$.

We will next examine   $(2,2)$-instance optimality for $\Delta^{\text{Linear}}_{1}$.  
In fact, as shown in \cite{best}, even for the case $k=1$, to achieve $(2,2)$-instance optimality for all $\vx\in \mathbb{F}^n$, the measurement number $m\geq c_0n$ for some universal constant $c_0>0$. Hence,  one has turned to  consider the non-uniform version of $(2,2)$-instance optimality to reduce the measurement number \cite{best}.
In contrast to the requirement that (\ref{instance_def}) must  hold for all $\vx\in \mathbb{F}^n$, the non-uniform $(2,2)$-instance optimality is considered for each specific $\vx_0\in \mathbb{F}^n$ \emph{in probability}.


 More concretely, the results are presented in the following form: for  each fixed $\vx_0\in \mathbb{F}^n$, if $\vA\in \mathbb{F}^{m\times n}$ is drawn randomly, then 
\begin{equation}\label{eqn: (2,2)-instance}
\|\vx_0-\Delta^{\text{Linear}}_{1}(\vA\vx_0)\|_2\leq C\sigma_k(\vx_0)_2
\end{equation}
holds for this particular $\vx_0$ with high probability.

Notably, when $\vA\in \mathbb{F}^{m\times n}$
  is a Gaussian random matrix satisfying $k \leq \alpha m/\log(en/k)$ for some universal positive constant $\alpha$, it has been demonstrated that equation (\ref{eqn: (2,2)-instance}) holds with high probability \cite{L1221, L1222}.

\subsection{Phaseless measurements} 
In this paper, we focus on the phaseless encoder $F(\vx)=\abs{\vA\vx}$. A fundamental observation is that $\abs{\vA \vx}=\abs{\vA (c\vx)}$ for any unimodular constant $c\in \vF$ (i.e., $\abs{c}=1$).
Consequently, in the context of phase retrieval, it is conventional to regard a vector $\vx$ and its scalar multiple $c\vx$ as equivalent, where $c\in \vF$ is unimodular. This equivalence reflects the inherent phase ambiguity in the problem.
To account for this ambiguity, we measure the distance between vectors $\vx$ and $\vy$ in the following manner:
For any $\boldsymbol{x},\boldsymbol{y}\in \mathbb{F}^{n}$ , we define the distance between $\boldsymbol{x}$ and $\boldsymbol{y}$  in $\ell_p$ (quasi-)norm as
\[
\text{dist}_p(\vx,\vy):=\text{dist}_{\mathbb{F},p}(\boldsymbol{x},\boldsymbol{y})=\underset{c\in \mathbb{F}, \abs{c}=1}{\min}\|\boldsymbol{x}-c\boldsymbol{y}\|_p.
\]
 We omit the subscript $p$ in $\dist_p$  when $p = 2$, i.e., $\text{dist}(\vx,\vy):=\text{dist}_2(\vx,\vy)$. As said before, in the phase retrieval framework, we consider $\vx\sim \vy$ if and only if $\dist(\vx,\vy)=0$.
Throughout the rest of this paper, we adopt a slightly unconventional notation by representing the set $\{c\cdot \vx : \abs{c}=1, c \in \vF \}$ simply as the vector $\vx \in \mathbb{F}^n$.   For convenience, we use $\abs{\vA}$ to denote the phaseless encoder corresponding to the matrix $\vA$. 

The phaseless $\ell_1$-minimization decoder, denoted as $\Delta_1$, is defined as follows: 
  \begin{equation}\label{eq:phaselessl1}
 \Delta_1(|\vA \vx|):=\mathop{\rm argmin}\limits_ {\vz\in {\mathbb F}^n}\{\|\vz\|_1: \abs{\vA\vz}=|\vA\vx|\}.
 \end{equation}
 {Despite the lack of theoretical convergence analysis, several efficient algorithms, such as the iterative projection algorithm \cite{Moravec} and ADM algorithm \cite{Yang}, have been developed to solve (\ref{eq:phaselessl1}).} Here, we modify the definition of instance optimality in phase retrieval problem as follows:
  For $q_1,q_2 >0$, we say that $(\abs{\vA},\Delta_1)$ is {\em phaseless $(q_1,q_2)$-instance optimality of order $k$} if 
\[
\dist_{q_1}{(\vx, \Delta_1(\abs{\vA\vx}))} \leq C\cdot k^{\frac{1}{q_1}-\frac{1}{q_2}}\sigma_k(\vx)_{q_2},\qquad \text{for all }\vx \in \vF^n.
\]
 Here, $C$ is a constant independent of $k, n$ and 
 \[
 \sigma_k(\vx)_q:=\min\limits_{{\vz\in \Sigma_k}}{\rm  dist}_q(\vx,\vz).
 \]

In the case where $\mathbb{F} = \mathbb{R}$, phaseless instance optimality was investigated in \cite{GWX}. 
Theorem 3.1 and Theorem 4.4 in \cite{GWX} demonstrate that $(\abs{\boldsymbol{A}}, \Delta_1)$  exhibits phaseless $(2, 1)$-instance optimality and $(1, 1)$-instance optimality of order $k$, respectively, provided $\boldsymbol{A}\in \R^{m\times d}$  is a standard Gaussian random matrix with $m = O(k \log(en/k))$.
The analytical tool employed in \cite{GWX} is the strong restricted isometry property, initially introduced by Voroninski and Xu \cite{VX}. However, this approach does not readily extend to cases where  $\mathbb{F} = \mathbb{C}$.

   {Xia and Xu \cite{XiaXu} proved that for $\mathbb{F} = \mathbb{C}$, $\Delta_1(|\vA\vx|) = \vx$
  holds with high probability for all $\vx \in \Sigma_k$, when $\vA \in \mathbb{C}^{m \times n}$
  is a Gaussian random matrix with $m = O(k \log(en/k))$. However, the instance optimality of the $\Delta_1$
  decoder in (\ref{eq:phaselessl1}) for arbitrary $\vx$ remains unresolved.\\
\indent While algorithms such as ThWF \cite{ThWF} and HTP \cite{Cai_SPR} efficiently recover $\vx \in \Sigma_k$ from $|\vA\vx|$, their worst-case sampling complexity of $O(k^2 \log n)$ is suboptimal. This suboptimality arises in part from the challenge of accurately identifying the support of sparse signals, a problem closely related to sparse PCA,  where polynomial-time recovery is widely believed to be impossible unless $k \lesssim \sqrt{n}$  \cite{PCA}. Furthermore, these algorithms neither address recovery errors for general $\vx \in \mathbb{F}^n$ nor establish phaseless $(q_1, q_2)$-instance optimality, thus leaving a theoretical gap in understanding their performance in more general settings.
}
 
 \subsection{Our contribution}

 In this paper, we focus on investigating the performance of the pair $(\abs{\boldsymbol{A}}, \Delta_1)$, especially in the context where $\mathbb{F} = \mathbb{C}$. The following questions arise of particular interest especially when $\vx\in \mathbb{C}^n$ and $\vA\in \mathbb{C}^{m\times n}$:
\begin{enumerate}[\bf{Question} i: ]
\item For general signal $\vx\in \mathbb{C}^n$  with a small $\sigma_k(\vx)_1$, is it possible to obtain a stable estimate of $\vx$ from $\abs{\vA\vx}\in \R^m$?

\item 
Consider $\vA\in \mathbb{C}^{m\times n}$  as a standard complex Gaussian random matrix with $m  \gtrsim k\log(en/k)$. 
Does the pair $(|\boldsymbol{A}|, \Delta_1)$
exhibit $(1, 1)$-instance optimality and $(2, 1)$-instance optimality? 

\item
Consider the measurement matrix $\vA$ as described in Question ii.  For any fixed $\vx_0\in \mathbb{C}^n$, does the pair $(\abs{\boldsymbol{A}}, \Delta_1)$
  exhibits $(2, 2)$-instance optimality of order $k$ with high probability?

\end{enumerate}

These questions focus on the potential for stable signal recovery and the optimality of the $\ell_1$-minimization decoder within the framework of phaseless compressed sensing. 
  Our aim is to address these inquiries comprehensively. In fact, this paper expands the analysis to encompass a more general $\ell_p$-minimization decoder for $p\in  (0,1]$, which includes the $\ell_1$ case as a specific instance.
  
  We tackle Question i by utilizing the phaseless bi-Lipschitz property of phase retrieval, as discussed in \cite{XXX}. 
  The key aspect of our solution to Question i is detailed in Corollary \ref{th: main}, which is a specific case of Theorem \ref{th:main11}.

Questions ii and iii are explored in the context of standard Gaussian random measurement matrix $\vA\in \mathbb{F}^{m\times n}$ when $\mathbb{F}\in \{\mathbb{R},\mathbb{C}\}$, which is asymptotically optimal regarding the condition number of the phaseless bi-Lipschitz property \cite{XXX}. When $m = O(k \log(en/k))$, we show that both $(1, 1)$-instance optimality and $(2, 1)$-instance optimality of order $k$ can be achieved for the pair $(|\boldsymbol{A}|, \Delta_1)$, as noted in Corollary \ref{(2,1) and (1,1)}. This resolves Question ii.

  {
Furthermore, Proposition \ref{thm:bad_22} demonstrates that $(2,2)$-instance optimality is not a viable concept for phaseless measurements when $m \ll n$. This limitation motivates us to address Question iii by analyzing non-uniform recovery results. In Theorem \ref{th:22}, we establish that for any fixed $\boldsymbol{x}_0 \in \mathbb{C}^n$, non-uniform $(2, 2)$-instance optimality of order $k$ holds for $(|\boldsymbol{A}|, \Delta_1)$, thereby providing a positive answer to Question iii.}

\subsection{Organization}
The organization of this paper is organized as follows. Section \ref{sec2} rigorously examines the instance optimality of the phase retrieval problem in both real and complex contexts, systematically addressing Questions i, ii, and iii. Section \ref{proof of Thm3} presents a comprehensive proof of Theorem \ref{th:main11}, which is pivotal for Corollary \ref{th: main} and serves to resolve Question i. Section \ref{sec: Gaussian} articulates the proof of Theorem \ref{th:lip}, which is directly relevant to Corollary \ref{(2,1) and (1,1)} and addresses Question ii.  {Finally, Section  \ref{sec: 22inst_new} and Section \ref{sec: 22inst2} elucidate the proofs of Proposition \ref{thm:bad_22} and Theorem \ref{th:22} repsectively, thereby conclusively addressing Question iii.}


\section{Main Results}\label{sec2}

To investigate phaseless instance-optimality in complex case, we introduce the phaseless bi-Lipschitz condition for the phaseless measurements when  $\mathbb{F}\in \{\mathbb{R},\mathbb{C}\}$.  The phaseless bi-Lipschitz condition bears a close resemblance to the restricted isometry property used in linear measurements, rendering it particularly suitable for phaseless measurements in both real and complex domains.

\begin{defi}(bi-Lipschitz) 
Assume that $\mathcal{X}\subset{\mathbb{F}}^{n}$.
We say that  $\vA\in \mathbb{F}^{m\times n}$ satisfy the phaseless bi-Lipschitz condition on the set $\mathcal{X}$ with positive constants $L$ and $U$ if
\begin{equation}\label{bi_constraint}
L\cdot \mathrm{dist}(\boldsymbol{x},\boldsymbol{y})\leq \||{\vA}\boldsymbol{x}|-|{\vA}\boldsymbol{y}|\|_2\leq U\cdot \mathrm{dist}(\boldsymbol{x},\boldsymbol{y}),\qquad \text {for all } \boldsymbol{x},\boldsymbol{y} \in \mathcal{X}.
\end{equation}
\end{defi}
 \begin{remark}
  {
 The phaseless bi-Lipschitz condition plays a key role in our analysis by extending RIP for compressed sensing  \cite{FoLai} from linear to phaseless measurements. 
 Unlike the Strong Restricted Isometry Property (SRIP), which views phaseless measurements as the union of finite linear measurements and is  applicable only to real case \cite{GWX,VX}, the phaseless bi-Lipschitz  condition is more versatile, handling both real and complex phaseless measurements.
  A key technical challenge in our analysis is estimating the term $\||\vA\vx| - |\vA\vy|\|_2$ when $\vx \notin \Sigma_k$ and $\vy \in \Sigma_k$.
  To do that, we utilize Lemma \ref{temp_lem}, which yields, for an appropriately chosen $\vz\in \Sigma_k$, 
  \[
  \||\vA\vz| - |\vA\vy|\|_2 - \|\vA(\vx-\vz)\|_2 \leq \left\| |\vA\vx| - |\vA\vy| \right\|_2 \leq \||\vA\vz| - |\vA\vy|\|_2 + \|\vA(\vx-\vz)\|_2.
  \]
  This decomposition allows us to estimate $\||\vA\vz| - |\vA\vy|\|_2$ using the phaseless bi-Lipschitz condition (since both $\vz, \vy \in \Sigma_k$) and to estimate $\|\vA(\vx-\vz)\|_2$ by leveraging techniques from compressed sensing. 
  This combined estimation strategy then enables us to estimate $\||\vA\vx| - |\vA\vy|\|_2$,
  which is a key step for establishing phaseless instance optimality.}
 \end{remark}

To achieve stable recovery in the phase retrieval problem, analogous to $\Delta^{\text{Linear}}_{p,\eta}$ in the linear measurements model, we define $\Delta_{p,\eta} : \mathbb{R}^m \rightarrow \mathbb{F}^n$ as the phaseless $\ell_p$-minimization decoder (for $p \in (0, 1]$), which is subject to $\ell_2$-bounded noise. Specifically, this is expressed as:
  \begin{equation}\label{eq:phaselessp}
 \Delta_{p,\eta}(\vy):=\mathop{\rm argmin}\limits_ {\vz\in {\mathbb F}^n}\{\|\vz\|_p\ :\ \|\abs{\vA\vz}-\vy\|_2\leq \eta\}.
 \end{equation}
 
By extending these concepts to phaseless measurements, we will subsequently demonstrate that the $\ell_p$-minimization decoder $\Delta_{p,\eta}$ (for $0 < p \leq 1$) achieves $(p, p)$-instance optimality and $(2, p)$-instance optimality, provided that $\boldsymbol{A}$ satisfies the phaseless bi-Lipschitz condition. While stability results for linear measurements can be found in \cite{FoLai,candes}, the non-linear nature of the phase retrieval problem renders the proof techniques employed in \cite{FoLai,candes} inapplicable to the results derived under model (\ref{eq:phaselessp}).

\begin{theorem}\label{th:main11}
Let $k \in \{1,\ldots,n\}$ be an integer and $\vA \in \mathbb{F}^{m\times n}$
  satisfy the phaseless bi-Lipschitz condition on the set $\mathcal{X}=\{\boldsymbol{x}\in \mathbb{F}^n\ :\ \|\boldsymbol{x}\|_0\leq (r+4)k\}$
 with positive constants $L$ and $U$.
The constant $r$ can be any positive real number that satisfies the inequality:
\begin{equation}\label{r_constraint}
L-U\cdot2^{1/p-1}\cdot\left(\frac{{2}}{{r}}\right)^{1/p-1/2}>0.
\end{equation}
 Then, for $p\in (0,1]$,
 the following holds for all $\vx\in \vF^n$:
 \begin{align}
 &{\rm dist}_p(\Delta_{p,\eta}(|\vA\vx|+\ve),{\boldsymbol{x}})\leq C_1 \cdot \sigma_{k}(\vx)_p+D_1 \cdot k^{1/p-1/2}\cdot \eta,\\
 &{\rm dist}(\Delta_{p,\eta}(|\vA\vx|+\ve),{\boldsymbol{x}})\leq  C_2\cdot  \frac{\sigma_{k}(\vx)_p}{k^{1/p-1/2}}+D_2\cdot \eta,
 \end{align}
 where  $\Delta_{p,\eta}$ is defined in (\ref{eq:phaselessp}) and $\|\ve\|_2\leq \eta$.
Here, the positive constants $C_1$, $C_2$, $D_1$, and $D_2$ depend on  the parameters $U$, $L$, $r$, and $p$,
with their explicit expressions detailed in the proof.
\end{theorem}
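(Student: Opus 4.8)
The plan is to analyze the error vector $\vh:=\hat{\vx}-\vx$, where $\hat{\vx}:=\Delta_{p,\eta}(|\vA\vx|+\ve)$, after normalizing the global phase so that $\dist(\hat{\vx},\vx)=\|\vh\|_2$. Two consequences of the constraint in (\ref{eq:phaselessp}) drive everything. First, since $\|\ve\|_2\le\eta$ the true signal $\vx$ is feasible, so minimality of $\hat{\vx}$ gives $\|\hat{\vx}\|_p\le\|\vx\|_p$; expanding $\|\vx+\vh\|_p^p\le\|\vx\|_p^p$ and using subadditivity of $\|\cdot\|_p^p$ for $p\le1$ yields the cone condition $\|\vh_{T_0^c}\|_p^p\le\|\vh_{T_0}\|_p^p+2\sigma_k(\vx)_p^p$, where $T_0$ indexes the $k$ largest entries of $\vx$. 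Second, feasibility of both $\vx$ and $\hat{\vx}$ gives, by the triangle inequality, $\big\||\vA\hat{\vx}|-|\vA\vx|\big\|_2\le2\eta$. I then order the coordinates of $\vh$ outside $T_0$ by decreasing magnitude into blocks $T_1,T_2,\dots$, set $T_{01}=T_0\cup T_1$ (so that $\hat{\vx}_{T_{01}},\vx_{T_{01}}\in\mathcal{X}$), and record the block estimate $\sum_{j\ge2}\|\vh_{T_j}\|_2\le(rk)^{1/2-1/p}\|\vh_{T_0^c}\|_p$. Finally, putting $\vy=\boldsymbol 0$ in (\ref{bi_constraint}) gives the frame-type bound $\|\vA\vz\|_2\le U\|\vz\|_2$ for all $\vz\in\mathcal{X}$, which I use to estimate measurement-domain tails block by block.

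The crux is to derive a phaseless lower bound for the \emph{full} vectors $\hat{\vx},\vx$ from the bi-Lipschitz condition, which a priori only applies to the sparse restrictions. I would bridge the measurement side with Lemma \ref{temp_lem}, applied twice (first replacing $\hat{\vx}$ by $\hat{\vx}_{T_{01}}$, then $\vx$ by $\vx_{T_{01}}$), to get
\[
\big\||\vA\hat{\vx}_{T_{01}}|-|\vA\vx_{T_{01}}|\big\|_2\le\big\||\vA\hat{\vx}|-|\vA\vx|\big\|_2+\|\vA\hat{\vx}_{T_{01}^c}\|_2+\|\vA\vx_{T_{01}^c}\|_2\le2\eta+\|\vA\hat{\vx}_{T_{01}^c}\|_2+\|\vA\vx_{T_{01}^c}\|_2,
\]
whose left-hand side is at least $L\,\dist(\hat{\vx}_{T_{01}},\vx_{T_{01}})$ by (\ref{bi_constraint}). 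The genuinely delicate point in the complex setting is to pass from $\dist(\hat{\vx}_{T_{01}},\vx_{T_{01}})$ back to $\dist(\hat{\vx},\vx)$, since the optimal unimodular phases of the restricted and full problems need not coincide. I resolve this using that $T_{01}$ and $T_{01}^c$ are disjoint: letting $c'$ be the minimizer for the restricted distance and inserting it into the phase-minimized full distance,
\[
\dist(\hat{\vx},\vx)\le\|\hat{\vx}-c'\vx\|_2\le\dist(\hat{\vx}_{T_{01}},\vx_{T_{01}})+\|\hat{\vx}_{T_{01}^c}\|_2+\|\vx_{T_{01}^c}\|_2,
\]
so that combining the two displays converts the feasibility bound into
\[
L\,\dist(\hat{\vx},\vx)\le2\eta+\|\vA\hat{\vx}_{T_{01}^c}\|_2+\|\vA\vx_{T_{01}^c}\|_2+L\big(\|\hat{\vx}_{T_{01}^c}\|_2+\|\vx_{T_{01}^c}\|_2\big).
\]

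It remains to bound every tail on the right. The $\vx$-tails are controlled by $\sigma_k(\vx)_p$ through Stechkin's inequality and the frame bound $U$. For the $\hat{\vx}$-tails I write $\hat{\vx}_{T_{01}^c}=\vx_{T_{01}^c}+\vh_{T_{01}^c}$, estimate $\|\vh_{T_{01}^c}\|_2\le\sum_{j\ge2}\|\vh_{T_j}\|_2\le(rk)^{1/2-1/p}\|\vh_{T_0^c}\|_p$, and then invoke the cone condition together with $\|\vh_{T_0}\|_p\le k^{1/p-1/2}\|\vh_{T_0}\|_2\le k^{1/p-1/2}\dist(\hat{\vx},\vx)$. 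This is exactly where (\ref{r_constraint}) enters: the $\vh$-part of the tails reproduces $\dist(\hat{\vx},\vx)$ on the right-hand side with a coefficient controlled by the left-hand side of (\ref{r_constraint})—the factor $2^{1/p-1}$ coming from $(a^p+b^p)^{1/p}\le2^{1/p-1}(a+b)$ in the cone step, the constant $U$ from the measurement tails, and the power $(2/r)^{1/p-1/2}$ from the block estimate—and (\ref{r_constraint}) is precisely the condition that this coefficient is strictly below $L$, so the term can be absorbed into the left-hand side. Solving the resulting inequality yields the $(2,p)$ estimate $\dist(\hat{\vx},\vx)\le C_2\,\sigma_k(\vx)_p/k^{1/p-1/2}+D_2\,\eta$ with explicit $C_2,D_2$. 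The $(p,p)$ estimate then follows cheaply: the cone condition gives $\|\vh\|_p^p\le2\|\vh_{T_0}\|_p^p+2\sigma_k(\vx)_p^p$ while $\|\vh_{T_0}\|_p\le k^{1/p-1/2}\dist(\hat{\vx},\vx)$, and substituting the just-proved $\ell_2$ bound produces $\dist_p(\hat{\vx},\vx)\le C_1\sigma_k(\vx)_p+D_1k^{1/p-1/2}\eta$.

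I expect the main obstacle to be the phase bookkeeping of the second paragraph—reconciling the distinct optimal phases of the restricted and full problems—which is what makes the complex case harder than the real, strong-RIP argument of \cite{GWX}; the disjoint-support device is what circumvents it, while Lemma \ref{temp_lem} handles the parallel difficulty of moving between full and restricted vectors inside the nonlinear map $|\vA\cdot|$. The remaining effort is careful constant-tracking through the block decomposition so that the absorption threshold matches (\ref{r_constraint}) and the constants $C_1,C_2,D_1,D_2$ come out explicitly in terms of $U,L,r,p$.
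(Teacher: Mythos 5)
Your overall strategy mirrors the paper's: peel the tails off the nonlinear term with Lemma \ref{temp_lem}, apply the bi-Lipschitz lower bound to sparse restrictions, control the error tail via the cone condition from $\ell_p$-minimality, and absorb the resulting multiple of ${\rm dist}(\hat{\vx},\vx)$ using (\ref{r_constraint}). Your phase bookkeeping (normalizing the full distance, then re-inserting the optimal phase $c'$ of the restricted problem and exploiting disjoint supports) is sound, and is an acceptable substitute for the paper's normalization $\langle \vx^{\#},\vx_{T_{01}}\rangle\geq 0$. The genuine gap is in the index-set bookkeeping. You take $T_0$ to be the $k$ largest entries of $\vx$ and let $T_1$ be a block of the error $\vh$; consequently the signal tails $\|\vx_{T_{01}^c}\|_2$ and $\|\vA\vx_{T_{01}^c}\|_2$ in your master inequality are tails of $\vx$ past only its $k$ largest entries, since the $rk$ indices in $T_1$ are chosen by $\vh$ and may miss all of the remaining mass of $\vx$. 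The Stechkin-type bound you need at this point, $\|\vx_{T_{01}^c}\|_2\lesssim k^{1/2-1/p}\sigma_k(\vx)_p$, is false in this situation: take $p=1$, $\vx=(M,\dots,M,1,0,\dots,0)$ with $k$ large entries $M$, and $\vh$ supported on the zero coordinates of $\vx$, so $T_1$ misses the $(k+1)$-st coordinate; then $\|\vx_{T_{01}^c}\|_2\geq 1=\sigma_k(\vx)_1$, while $k^{-1/2}\sigma_k(\vx)_1=k^{-1/2}$ is arbitrarily smaller. The only valid bounds in your setting are $\|\vx_{T_{01}^c}\|_2\leq\sigma_k(\vx)_2\leq\sigma_k(\vx)_p$ (and the analogous bound with the factor $U$ for $\|\vA\vx_{T_{01}^c}\|_2$), so your absorption argument proves at best ${\rm dist}(\hat{\vx},\vx)\lesssim\sigma_k(\vx)_p+\eta$, which misses the claimed $(2,p)$ estimate by the factor $k^{1/p-1/2}$; and since you then feed this $\ell_2$ bound back through the cone condition, the $(p,p)$ estimate inherits the same loss, giving $k^{1/p-1/2}\sigma_k(\vx)_p$ where $\sigma_k(\vx)_p$ is required.

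The repair is precisely the paper's device, and it is small but essential: anchor the comparison at the $2k$ largest entries of $\vx$. That is, let $T_{01}$ collect the $2k$ largest entries of $\vx$ (both sub-blocks defined from $\vx$, not from $\vh$), define the error relative to the truncation, $\vh:=\hat{\vx}-\vx_{T_{01}}$, and take the $rk$-blocks $S_1,S_2,\dots$ of $\vh$ outside $S_0:=T_{01}$. Then every signal tail peeled off by Lemma \ref{temp_lem} is $\vx_{T_{01}^c}$, the tail of $\vx$ past its $2k$ largest entries, for which the block argument (the paper's (\ref{key1})) does give $\|\vx_{T_{01}^c}\|_2\leq\sigma_k(\vx)_p/k^{1/p-1/2}$ and $\|\vA\vx_{T_{01}^c}\|_2\leq U\sigma_k(\vx)_p/k^{1/p-1/2}$, while the estimate's tail $\hat{\vx}_{S_{01}^c}=\vh_{S_{01}^c}$ is handled by the cone condition (now relative to $S_0$, using $\sigma_{2k}(\vx)_p\leq\sigma_k(\vx)_p$) exactly as you propose. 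With this one change your argument goes through and recovers the theorem with explicit constants.
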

\begin{proof}
The proof can be found in Section \ref{proof of Thm3}.
\end{proof}

\begin{remark}
 {For complex Gaussian measurements, it was shown in \cite{XiaZhou} that the $\ell_p$-minimization decoder can achieve exact recovery of $k$-sparse complex signals under the condition $m \gtrsim k + pk \log(en/k)$. Furthermore, \cite{XiaZhou} proposes an iteratively reweighted algorithm for the $\ell_p$-minimization decoder. The convergence analysis of the associated algorithm for solving $\ell_p$ minimization in the presence of noise remains an open problem for future investigation.}
\end{remark} 
By setting $p=1$ and $\mathbb{F}=\mathbb{C}$, we can directly address Question i in Corollary \ref{th: main}.
\begin{corollary}\label{th: main}
Let $k \in \{1,\ldots,n\}$ be an integer and $\vA \in \mathbb{F}^{m\times n}$
  satisfy the phaseless bi-Lipschitz condition on the set $\mathcal{X}:=\{\boldsymbol{x}\in \mathbb{F}^n\ :\ \|\boldsymbol{x}\|_0\leq (r+4)k\}$
 with positive constants $L$ and $U$.
The constant $r$ can be any positive real number that satisfying  $r>2\cdot (U/L)^2$. Then for all $\vx\in \vF^n$, the following holds:
 \begin{align}
 &{\rm dist}_1(\Delta_{1,\eta}(|\vA\vx|+\ve),{\boldsymbol{x}})\leq C_1\cdot  \sigma_{k}(\vx)_1+D_1\cdot  \sqrt{k}\cdot \eta,\label{eqn: a}\\
 &{\rm dist}(\Delta_{1,\eta}(|\vA\vx|+\ve),{\boldsymbol{x}})\leq  C_2\cdot  \frac{\sigma_{k}(\vx)_1}{\sqrt{k}}+D_2\cdot \eta,\label{eqn: b}
 \end{align}
where  $\Delta_{1,\eta}$ is defined in (\ref{eq:phaselessp}) and $\|\ve\|_2\leq \eta$.
Here 
\[
\begin{aligned}
&C_1=\frac{2\cdot\frac{U}{L}\cdot\left(\frac{1}{r^{1/2}}+1\right)\cdot (2+r)^{1/2}}{1-\frac{U}{L}\left(\frac{{2}}{{r}}\right)^{1/2}}+2,\qquad D_1=  \frac{1}{L}\cdot\frac{2\cdot {(2+r)}^{1/2}}{1-\frac{U}{L}\cdot\left(\frac{{2}}{{r}}\right)^{1/2}},\\
& C_2=\frac{\frac{U}{L}\cdot\left(\frac{1}{r^{1/2}}+1\right)\cdot \Big(1+\frac{1}{(r/2)^{1/2}}\Big)}{1-\frac{U}{L}\cdot \left(\frac{{2}}{{r}}\right)^{1/2}}+\frac{1}{r^{1/2}}+1,\qquad D_2= \frac{1}{L}\cdot\frac{2\cdot\Big(1+\frac{1}{(r/2)^{1/2}}\Big)}{1-\frac{U}{L}\cdot\left(\frac{{2}}{{r}}\right)^{1/2}}.
 \end{aligned}
 \]
\end{corollary}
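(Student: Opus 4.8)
The plan is to obtain Corollary \ref{th: main} as the direct specialization of Theorem \ref{th:main11} to the endpoint $p=1$, so the entire argument reduces to verifying that the hypotheses and the two error bounds of the theorem collapse to the stated forms when $p=1$. First I would check the admissibility condition on $r$. Substituting $p=1$ into the feasibility inequality (\ref{r_constraint}) gives $2^{1/p-1}=2^{0}=1$ and $1/p-1/2=1/2$, so (\ref{r_constraint}) becomes $L-U\,(2/r)^{1/2}>0$. Squaring and rearranging, this is equivalent to $r>2(U/L)^2$, which is exactly the range of $r$ asserted in the corollary; hence any $r$ satisfying the corollary's hypothesis satisfies the theorem's hypothesis, and the phaseless bi-Lipschitz condition is imposed on the same set $\mathcal{X}=\{\vx:\|\vx\|_0\le(r+4)k\}$.

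Next I would specialize the two conclusions. With $p=1$ the exponent $k^{1/p-1/2}$ appearing in both inequalities of Theorem \ref{th:main11} equals $k^{1/2}=\sqrt{k}$, and $\sigma_k(\vx)_p=\sigma_k(\vx)_1$; therefore the first bound becomes ${\rm dist}_1(\Delta_{1,\eta}(|\vA\vx|+\ve),\vx)\le C_1\,\sigma_k(\vx)_1+D_1\sqrt{k}\,\eta$ and the second becomes ${\rm dist}(\Delta_{1,\eta}(|\vA\vx|+\ve),\vx)\le C_2\,\sigma_k(\vx)_1/\sqrt{k}+D_2\,\eta$, matching (\ref{eqn: a}) and (\ref{eqn: b}). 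The decoder $\Delta_{1,\eta}$ is simply $\Delta_{p,\eta}$ from (\ref{eq:phaselessp}) evaluated at $p=1$, and the noise hypothesis $\|\ve\|_2\le\eta$ is unchanged, so no new decoder or measurement model needs to be introduced.

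The only genuine bookkeeping---and the step I expect to be the main obstacle---is confirming that the explicit constants $C_1,C_2,D_1,D_2$ stated in the corollary are precisely the $p=1$ evaluations of the general constants produced in the proof of Theorem \ref{th:main11}. To do this I would carry the general closed forms of $C_1(p),C_2(p),D_1(p),D_2(p)$ from Section \ref{proof of Thm3} and set $p=1$. In particular the common denominator $1-\frac{U}{L}\,2^{1/p-1}(2/r)^{1/p-1/2}$ collapses to $1-\frac{U}{L}(2/r)^{1/2}$, which is positive by the step above and which appears, as expected, in all four displayed constants; the factor $2^{1/p-1}=1$ together with the various exponents $1/p-1/2=1/2$ then reduces the numerators to $2\frac{U}{L}(r^{-1/2}+1)(2+r)^{1/2}$ and its companions. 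Since every $p$-dependent exponent in those numerators is of the form $1/p-1/2$ or $1/p-1$, each evaluates at $p=1$ to a clean power of $r$ or to $1$, and I would match terms directly to confirm agreement with the four formulas in the statement. No new estimate is required beyond Theorem \ref{th:main11} itself.
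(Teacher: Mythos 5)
Your proposal is correct and coincides with the paper's own treatment: Corollary \ref{th: main} is obtained there exactly as the $p=1$ specialization of Theorem \ref{th:main11}, with the condition $r>2(U/L)^2$ arising from (\ref{r_constraint}) and the four constants arising by setting $p=1$ in $\widetilde{C}_1,\widetilde{C}_2,\widetilde{D}_1,\widetilde{D}_2$ and the final constants of Section \ref{proof of Thm3}, all of which your bookkeeping matches.
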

  {
 \begin{remark}
In phase retrieval problem, the lower bound in (\ref{bi_constraint}) is equivalent to the lower restricted isometry property (LRIP) \cite[Definition 2]{KG}. Adapting Theorem 1 from \cite{KG} to the sparse phase retrieval problem, we can find that if the decoder $\Delta$ is defined as:
\[
 \Delta(\vy):=\underset{\vz\in \mathbb{F}^{n}}{\mathrm{argmin}}\{\||\vA\vz|-\vy\|_2\ :\  \vz\in \mathcal{X}\},
\]
and if LRIP holds, then for all $\vx\in \mathbb{F}^{n}$, 
\begin{equation}
 \label{eqn: LRIP_error_estimation} 
 \mathrm{dist}(\vx,\Delta(|\vA\vx|+\ve))\lesssim \inf_{\vx'\in \mathcal{X}}\big(\mathrm{dist}(\vx,\vx')+ \||\vA\vx|-|\vA\vx'|\|_2\big)+ \|\ve\|_2.
  \end{equation} 
  Leveraging the upper bound in  (\ref{bi_constraint})  and employing similar techniques presented in this paper, we can establish a comparable phaseless instance optimality based on (\ref{eqn: LRIP_error_estimation}), analogous to that in (\ref{eqn: b}). For brevity, we omit the detailed derivation.
 \end{remark}
 }

Theorem \ref{th:main11} stipulates that the matrix $\boldsymbol{A} \in \mathbb{F}^{m \times n}$
  must satisfy the phaseless bi-Lipschitz condition as specified in the theorem. We now demonstrate that a standard Gaussian random matrix $\boldsymbol{A} \in \mathbb{F}^{m \times n}$
  fulfills this condition. The characterization of such a matrix is as follows:

\begin{enumerate}[1:]
\item For $\vF=\mathbb{R}$, the elements of $\boldsymbol{A}$ are independently drawn from $\mathcal{N}(0,1)$;
\item For $\vF=\mathbb{C}$, the elements of $\boldsymbol{A}$ are independently drawn from $\mathcal{N}(0,1/2)+\mathrm{i}\mathcal{N}(0,1/2)$.
\end{enumerate}
Consistent with \cite{XXX}, we define the constant $\beta_0^{\vF}$  as follows:
\begin{equation}\label{eq:mdlower-new}
\beta_0^{\vF}:=\begin{cases}
\sqrt{\frac{\pi}{\pi-2}}\,\,\approx\,\, 1.659,  & \text{if $\vF=\mathbb{R}$;}\\
\sqrt{\frac{4}{4-\pi}}\,\,\approx\,\, 2.159,  & \text{if $\vF=\mathbb{C}$.}\\
\end{cases}
\end{equation}
Then we have
\begin{theorem}\label{th:lip}
Let $k \in \{1,\ldots,n\}$ be an integer and
 $\vA\in \vF^{m\times n}$ be a standard Gaussian random matrix with $m
\geq C k \log (en/k)$. 
Then  with probability at least $1-\exp(-c_0m)$, the matrix $\vA\in \mathbb{F}^{m\times n}$ satisfies the phaseless bi-Lipschitz condition on the set $\mathcal{X}:=\{\boldsymbol{x}\in \mathbb{F}^n\ :\ \|\boldsymbol{x}\|_0\leq (r+4)k\}$ with positive constants $L$ and $U$ such that $\beta_0^{\mathbb{F}}\leq U/L\leq \beta_0^{\mathbb{F}}+0.01$. Here, $C>0$ is a constant that depends on $r$,  where $r$ is a  constant satisfying 
\begin{equation}\label{r_gaussian}
1-(\beta_0^{\mathbb{F}}+0.01)\cdot2^{1/p-1}\cdot\left(\frac{{2}}{{r}}\right)^{1/p-1/2}>0
\end{equation}
for any fixed $p\in(0,1]$,  and $c_0$ is a universal positive constant.
\end{theorem}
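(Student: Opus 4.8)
The plan is to establish the two-sided bound \eqref{bi_constraint} for a Gaussian $\vA$ by pairing a \emph{sharp} computation of the per-measurement expectation with a concentration-plus-covering argument over the sparse set $\mathcal{X}$. Because rescaling $\vx,\vy$ by a common factor rescales both sides of \eqref{bi_constraint} identically, it suffices to control the ratio $\||\vA\vx|-|\vA\vy|\|_2/\dist(\vx,\vy)$; the $\sqrt m$-growth of $L$ and $U$ is then harmless since only the quotient $U/L$ is constrained. Writing $\va_1,\ldots,\va_m$ for the rows of $\vA$, we have $\||\vA\vx|-|\vA\vy|\|_2^2=\sum_{i=1}^m\big(|\langle\va_i,\vx\rangle|-|\langle\va_i,\vy\rangle|\big)^2$, a sum of i.i.d.\ terms with mean $m\,\mu(\vx,\vy)$, where $\mu(\vx,\vy):=\mathbb E\big[(|\langle\va,\vx\rangle|-|\langle\va,\vy\rangle|)^2\big]$ for a single Gaussian row $\va$.

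The first key step pins down the range of the normalized expectation $\mu(\vx,\vy)/\dist(\vx,\vy)^2$, following the computations of \cite{XXX}. Expanding gives
\[
\mu(\vx,\vy)=\|\vx\|_2^2+\|\vy\|_2^2-2\,\mathbb E\big[|\langle\va,\vx\rangle|\,|\langle\va,\vy\rangle|\big],
\]
and I would fix the optimal unimodular $c$ realizing $\dist(\vx,\vy)=\|\vx-c\vy\|_2$. The upper extreme is immediate from $\big||\langle\va,\vx\rangle|-|\langle\va,c\vy\rangle|\big|\le|\langle\va,\vx-c\vy\rangle|$, giving $\mu(\vx,\vy)\le\dist(\vx,\vy)^2$, with equality for parallel $\vx,\vy$; the lower extreme is achieved by orthogonal equal-norm vectors, for which $\langle\va,\vx\rangle$ and $\langle\va,\vy\rangle$ are independent and the cross term factorizes. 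Substituting $\mathbb E|\langle\va,\vx\rangle|=\sqrt{2/\pi}\,\|\vx\|_2$ in the real case (respectively the Rayleigh mean $\tfrac{\sqrt\pi}{2}\|\vx\|_2$ in the complex case) yields $\mu/\dist^2=(\pi-2)/\pi$ (respectively $(4-\pi)/4$). Hence $\mu(\vx,\vy)/\dist(\vx,\vy)^2$ ranges in $[(\beta_0^{\vF})^{-2},\,1]$, so the \emph{expected} condition number is exactly $\beta_0^{\vF}$.

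The second key step upgrades this pointwise-in-expectation statement to a uniform one using only $O(k\log(en/k))$ rows. Each summand $(|\langle\va_i,\vx\rangle|-|\langle\va_i,\vy\rangle|)^2$ is sub-exponential, so Bernstein's inequality controls $\tfrac1m\||\vA\vx|-|\vA\vy|\|_2^2$ near $\mu(\vx,\vy)$ within relative error $\theta$ with failure probability $\exp(-c\,\theta^2m)$. To make this uniform I restrict to the unit sphere and fix the supports $S_1=\mathrm{supp}(\vx),S_2=\mathrm{supp}(\vy)$, each of size at most $(r+4)k$; the pair $(\vx,\vy)$ then lies in a product of spheres of real dimension $O((r+4)k)$, admitting an $\varepsilon$-net of cardinality $(C/\varepsilon)^{O((r+4)k)}$. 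A union bound over the $\binom{n}{(r+4)k}^2$ support choices and over the net contributes $\log N\lesssim(r+4)k\log(en/k)$ to the exponent, so failure probability $\exp(-c_0m)$ is attained once $m\ge Ck\log(en/k)$ with $C=C(r)$. I then transfer the net estimate to every pair via $\big|\,\||\vA\vx|-|\vA\vy|\|_2-\||\vA\vx'|-|\vA\vy'|\|_2\,\big|\le\|\vA(\vx-\vx')\|_2+\|\vA(\vy-\vy')\|_2$, bounding $\|\vA(\cdot)\|_2$ on the $2(r+4)k$-sparse differences by the restricted isometry property of Gaussian matrices, valid in the same regime. Since both near-parallel and near-orthogonal sparse pairs lie in $\mathcal{X}$, any valid pair $(L,U)$ must satisfy $U/L\ge\beta_0^{\vF}$.

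The main obstacle is preserving the \emph{sharp} ratio $U/L\le\beta_0^{\vF}+0.01$ through the discretization: both the Bernstein fluctuation $\theta$ and the net radius $\varepsilon$ perturb $L$ and $U$ away from $\sqrt{m\,\mu_-}$ and $\sqrt{m\,\mu_+}$, and these perturbations must be budgeted so that their combined effect on the quotient stays under $0.01$. The delicate side is the lower bound $L\cdot\dist(\vx,\vy)\le\||\vA\vx|-|\vA\vy|\|_2$, a uniform lower-tail estimate minimized exactly at the near-orthogonal configurations already sitting at the extremal value $(\beta_0^{\vF})^{-2}$, where the slack is smallest. Accordingly, I would fix $\theta$ and $\varepsilon$ as explicit small constants (chosen compatibly with \eqref{r_gaussian}) before fixing the sample-complexity constant $C=C(r)$, so that the proven $L$ and $U$ realize a ratio in the target window $[\beta_0^{\vF},\,\beta_0^{\vF}+0.01]$.
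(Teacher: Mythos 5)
Your outer structure (fix the supports, then union bound over the $\binom{n}{O(k)}$ support choices, absorbing the entropy $k\log(en/k)$ into the exponent) matches the paper's reduction. But inside a fixed support the paper proves nothing new: it simply applies Theorem 4.1 of \cite{XXX} (restated as Theorem \ref{thm: complex_beta}) with $\delta=0.01$ to the coordinate subspace $\mathcal{X}_0=\{\vz:\mathrm{supp}(\vz)\subseteq K_0\}$, which already supplies the uniform two-sided bound with ratio $U/L\leq \beta_0^{\vF}+0.01$ on that entire subspace once $m\gtrsim (r+4)k$, and then takes the union bound. Your proposal instead tries to re-derive the per-subspace statement from scratch via an expectation computation plus Bernstein plus an $\varepsilon$-net, and that is where it breaks.

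The genuine gap is the net-transfer step for the \emph{lower} bound. Bernstein gives you, for each fixed pair, relative-error control: deviations of $\tfrac1m\||\vA\vx|-|\vA\vy|\|_2^2$ from $\mu(\vx,\vy)$ of size $\theta\,\dist(\vx,\vy)^2$. But your net is an absolute $\varepsilon$-net of the sphere(s), and the transfer inequality
\[
\||\vA\vx|-|\vA\vy|\|_2 \;\geq\; \||\vA\vx'|-|\vA\vy'|\|_2-\|\vA(\vx-\vx')\|_2-\|\vA(\vy-\vy')\|_2
\]
incurs an \emph{additive} error of order $\sqrt m\,\varepsilon$. For a near-parallel pair with $\dist(\vx,\vy)\ll\varepsilon$ (such pairs certainly lie in $\mathcal{X}$), the target lower bound is $L\,\dist(\vx,\vy)\approx c\sqrt m\,\dist(\vx,\vy)$, which is swamped by the error term $\sqrt m\,\varepsilon$; worse, the nearest net pair $(\vx',\vy')$ may have $\dist(\vx',\vy')=0$, so the transferred bound is vacuous. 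No choice of \emph{constant} $\theta,\varepsilon$ repairs this: the failure is not a constant-budgeting issue but a degeneracy of the covering argument as $\dist(\vx,\vy)\to 0$. Note that your own diagnosis locates the delicate regime at near-orthogonal configurations (smallest expectation slack); the actual obstruction is at near-parallel configurations. This is precisely why lower Lipschitz bounds in phase retrieval cannot be obtained by a naive net argument — in the real case one needs the strong-RIP reparametrization $(\vx-\vy,\vx+\vy)$ of \cite{VX}, and in the complex case the substantially more delicate analysis of \cite{XXX}; the paper's proof sidesteps the difficulty entirely by quoting that result. A secondary soft spot: you compute $\mu/\dist^2$ only at the two extremal configurations (parallel and orthogonal equal-norm) and assert these are the global extremes; that global pinching $\mu/\dist^2\in[(\beta_0^{\vF})^{-2},1]$ is itself nontrivial and is part of what \cite{XXX} establishes. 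The upper Lipschitz bound, by contrast, is unproblematic, since it reduces pointwise to $\|\vA(\vx-c\vy)\|_2$ and standard RIP estimates.
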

\begin{proof}
The proof can be found in Section \ref{sec: Gaussian}. 
\end{proof}
\begin{remark}\label{r_rem}
The condition in (\ref{r_gaussian}) implies the requirements specified in (\ref{r_constraint}).
 We can select a particular value of $r$ to fulfill the condition stipulated in (\ref{r_gaussian}). For instance, by opting for $r = 10$, the following inequality is valid for any $p \in (0, 1]$:
\[
\small
\begin{split}
&1-(\beta_0^{\mathbb{F}}+0.01)\cdot2^{1/p-1}\cdot\left(\frac{{2}}{{r}}\right)^{1/p-1/2}= 1 -(\beta_0^{\mathbb{F}}+0.01) \cdot \left(\frac{4}{r}\right)^{\frac{1}{p}-1} \cdot \left(\frac{2}{r}\right)^{\frac{1}{2}} \\
&=1 - (\beta_0^{\mathbb{F}} + 0.01) \cdot \left(\frac{4}{10}\right)^{\frac{1}{p}-1} \cdot \left(\frac{2}{10}\right)^{\frac{1}{2}}
 > 1 - (\beta_0^{\mathbb{F}} + 0.01) \cdot \left(\frac{1}{5}\right)^{\frac{1}{2}} > 0.
\end{split}\]
\end{remark}
By combining Corollary \ref{th: main} with Theorem \ref{th:lip}, we can directly obtain Corollary \ref{(2,1) and (1,1)}, which answers Question ii, when $\mathbb{F}=\mathbb{C}$.
\begin{corollary}\label{(2,1) and (1,1)}
 Let $\Delta_1$ be the phaseless $\ell_1$-minimization decoder as defined in (\ref{eq:phaselessl1}). Consider $\boldsymbol{A} \in \mathbb{F}^{m \times n}$ as a standard Gaussian random matrix and let $k \in \{1,\ldots,n\}$ be an integer. Then, if $m \geq C_0 k \log(en/k)$, the pair $(|\boldsymbol{A}|, \Delta_1)$ exhibits both $(2,1)$-instance optimality and $(1,1)$-instance optimality of order $k$ with probability at least $1 - \exp(-c_0 m)$. Here, $C_0$ and $c_0$ are two universal positive constants.
   \end{corollary}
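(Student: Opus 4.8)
The plan is to obtain Corollary \ref{(2,1) and (1,1)} as a direct synthesis of the deterministic estimate in Corollary \ref{th: main} with the probabilistic bi-Lipschitz guarantee of Theorem \ref{th:lip}, the only real work being to pick one value of the free parameter $r$ compatible with both. First I would note that the decoder $\Delta_1$ of (\ref{eq:phaselessl1}) coincides with $\Delta_{1,\eta}$ of (\ref{eq:phaselessp}) in the noiseless case $\eta = 0$, $\ve = \boldsymbol 0$. Specializing the two bounds of Corollary \ref{th: main} to $\eta = 0$ therefore gives, for every $\vx \in \vF^n$,
\[
\dist_1(\vx, \Delta_1(|\vA\vx|)) \le C_1\, \sigma_k(\vx)_1, \qquad \dist(\vx, \Delta_1(|\vA\vx|)) \le C_2\, \frac{\sigma_k(\vx)_1}{\sqrt k},
\]
and since $k^{1/2-1} = k^{-1/2}$, these are precisely the defining inequalities of $(1,1)$- and $(2,1)$-instance optimality of order $k$. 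Thus the whole problem reduces to verifying, for a Gaussian $\vA$, the bi-Lipschitz hypothesis that Corollary \ref{th: main} requires.

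To that end I would fix $r = 10$ and apply Theorem \ref{th:lip} with $p = 1$. By Remark \ref{r_rem}, $r = 10$ satisfies the feasibility condition (\ref{r_gaussian}) for every $p \in (0,1]$, so Theorem \ref{th:lip} applies: whenever $m \ge C k \log(en/k)$, with probability at least $1 - \exp(-c_0 m)$ the matrix $\vA$ obeys the phaseless bi-Lipschitz condition on $\mathcal{X} = \{\vx : \|\vx\|_0 \le 14k\}$ with constants $L, U$ satisfying $\beta_0^{\mathbb F} \le U/L \le \beta_0^{\mathbb F} + 0.01$. As $r = 10$ is now an absolute constant, the dependence of $C$ on $r$ collapses to a universal constant $C_0$, matching the statement.

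The one point requiring a check is the hypothesis $r > 2(U/L)^2$ of Corollary \ref{th: main}. Using $U/L \le \beta_0^{\mathbb F} + 0.01$ it suffices that $10 > 2(\beta_0^{\mathbb F} + 0.01)^2$. For $\mathbb F = \mathbb C$ one has $\beta_0^{\mathbb C} \approx 2.159$, whence $2(\beta_0^{\mathbb C} + 0.01)^2 \approx 9.41 < 10$; for $\mathbb F = \mathbb R$ the corresponding quantity is smaller still. Hence $r = 10$ lies simultaneously in the feasibility window of (\ref{r_gaussian}) and strictly above $2(U/L)^2$, so on the high-probability event of Theorem \ref{th:lip} all hypotheses of Corollary \ref{th: main} are met. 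Invoking that corollary with $\eta = 0$ on this event delivers both instance-optimality estimates, which completes the argument.

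I expect the sole genuine obstacle to be precisely this constant-compatibility step. For $p = 1$ the constraint (\ref{r_constraint}) reduces exactly to $r > 2(U/L)^2$, so one needs a single $r$ meeting both (\ref{r_gaussian}) (with $p=1$) and $r > 2(U/L)^2$; the decisive feature of Theorem \ref{th:lip} is that it pins $U/L$ to within $0.01$ of the sharp constant $\beta_0^{\mathbb F}$, keeping this window nonempty so that the explicit choice $r = 10$ serves for both $\mathbb F = \mathbb R$ and $\mathbb F = \mathbb C$. Everything else is a verbatim appeal to the two preceding results.
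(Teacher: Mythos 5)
Your proposal is correct and follows essentially the same route as the paper's own proof: specialize Corollary \ref{th: main} to $\eta=0$, invoke Theorem \ref{th:lip} (via Remark \ref{r_rem}) with the explicit choice $r=10$, and verify the compatibility condition $r>2(U/L)^2$ using $U/L\leq \beta_0^{\mathbb{F}}+0.01$, exactly as the paper does (your numerical check $2(\beta_0^{\mathbb{C}}+0.01)^2\approx 9.41<10$ mirrors the paper's verification). The only cosmetic difference is that you make the arithmetic explicit where the paper states the inequality $10>2(\beta_0^{\mathbb{F}}+0.01)^2$ without computation.
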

   \begin{proof}
Based on Corollary \ref{th: main}, when $\eta = 0$, the results for $(1,1)$-instance optimality and $(2,1)$-instance optimality can be expressed as follows:
\[{\rm dist}_1(\Delta_{1,\eta}(|\boldsymbol{A}\boldsymbol{x}|), {\boldsymbol{x}}) \leq C_1 \cdot \sigma_k(\boldsymbol{x})_1 \quad \text{and} \quad {\rm dist}(\Delta_{1,\eta}(|\boldsymbol{A}\boldsymbol{x}|), {\boldsymbol{x}}) \leq C_2 \cdot \frac{\sigma_k(\boldsymbol{x})_1}{\sqrt{k}},\]
where $C_1$ and $C_2$ are positive constants dependent on $r$ and $U/L$, provided that $r > 2 \cdot (U/L)^2$.

 Thus, it suffices to identify an appropriate choice for $r$. According to Theorem \ref{th:lip} and Remark \ref{r_rem}, by selecting $r = 10$, we can assert that, with a probability of at least $1 - \exp(-c_0 m)$, the standard Gaussian random matrix satisfies the phaseless bi-Lipschitz condition with $ \beta_0^{\mathbb{F}} \leq U/L \leq \beta_0^{\mathbb{F}} + 0.01$ on the set 
 \[
 \mathcal{X} := \{\boldsymbol{x} \in \mathbb{F}^n \ : \ \|\boldsymbol{x}\|_0 \leq (r + 4)k\},
  \]
   provided $m \geq C_0 k \log(en/k)$ for some universal positive constant $C_0$.
{We can also verify that $r=10$ satisfies the condition $r >2 (\beta_0^{\mathbb{F}} + 0.01)^2\geq  2 \cdot (U/L)^2$ in Corollary \ref{th: main}.}
  In this scenario, $C_1$ and $C_2$ in Corollary \ref{th: main} can be bounded above by some universal positive constant. Thus, the proof is complete.
   \end{proof}
   \begin{remark}
The conclusion in  Corollary \ref{(2,1) and (1,1)} can be readily extended to the $\ell_p$-minimization decoder $\Delta_{p,0}$ defined in (\ref{eq:phaselessp}) for any fixed $0 < p \leq 1$. Consider $\boldsymbol{A} \in \mathbb{F}^{m \times n}$ as a standard Gaussian random matrix. By selecting $r = 10$ and ensuring that $m \gtrsim k \log(en/k)$, it follows from Theorem \ref{th:main11} and Theorem \ref{th:lip} that the pair $(|\boldsymbol{A}|, \Delta_{p,0})$ exhibits both $(2,p)$-instance optimality and $(p,p)$-instance optimality of order $k$ with high probability. Since the proof closely parallels the preceding arguments, we omit the details here.
   \end{remark}
  {We next turn to Question iii. To begin with, in the context of compressed sensing, it has been established that $(2,2)$-instance optimality is unattainable for $\vA \in \mathbb{F}^{m \times n}$
  when $m \ll n$ \cite{Gribonval1,best}. Proposition \ref{thm:bad_22} extends this result, demonstrating that such a limitation also applies to the  phase retrieval problem.
  \begin{prop}\label{thm:bad_22}
For any given matrix $\vA \in \mathbb{F}^{m\times n}$ ($m < n$) and integer $k \in \{1,\ldots,n\}$, suppose there exists a decoder $\Delta$ such that for all $\vx \in \mathbb{F}^n$, \begin{equation}
 \label{eqn: uniform_22_upper} 
 \mathrm{dist}(\Delta(|\vA\vx|),\vx)_2 \leq c_0 \cdot \sigma_k(\vx)_2, 
 \end{equation}
  where $c_0$
  is a positive absolute constant. Then $m \gtrsim n$.
  \end{prop}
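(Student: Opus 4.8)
The plan is to follow the classical lower-bound argument for $(2,2)$-instance optimality in compressed sensing \cite{best,Gribonval1} and to observe that, for phaseless measurements, it reduces to exactly the same linear-algebraic obstruction. The starting point is the null space $N:=\{\vz\in\vF^n:\vA\vz=\boldsymbol 0\}$, which has $\vF$-dimension $d:=\dim_{\vF}N\ge n-m$ since $\mathrm{rank}(\vA)\le m$. The key simplification is that the phase ambiguity is invisible on $N$: if $\vx\in N$ then $|\vA\vx|=\boldsymbol 0=|\vA\boldsymbol 0|$, so that $\Delta(|\vA\vx|)=\Delta(\boldsymbol 0)$ irrespective of the phases.

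First I would show that (\ref{eqn: uniform_22_upper}) forces every vector of $N$ to be far from sparse. Applying (\ref{eqn: uniform_22_upper}) to $\vx=\boldsymbol 0$ gives $\dist(\Delta(\boldsymbol 0),\boldsymbol 0)\le c_0\,\sigma_k(\boldsymbol 0)_2=0$, hence $\Delta(\boldsymbol 0)=\boldsymbol 0$. Applying it to an arbitrary $\vx\in N$ and using $\Delta(|\vA\vx|)=\Delta(\boldsymbol 0)=\boldsymbol 0$ yields
\[
\|\vx\|_2=\dist(\boldsymbol 0,\vx)=\dist(\Delta(|\vA\vx|),\vx)\le c_0\,\sigma_k(\vx)_2 .
\]
Here one checks that the phaseless best-$k$-term error coincides with the usual one, namely $\sigma_k(\vx)_2=\|\vx-\vx_S\|_2$ with $S$ indexing the $k$ largest-magnitude entries of $\vx$ (replacing $\vz$ by $c\vz$ inside $\Sigma_k$ is harmless). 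Writing $\sigma_k(\vx)_2^2=\|\vx\|_2^2-\|\vx_S\|_2^2$ and rearranging, the inequality above is equivalent to
\[
\|\vx_T\|_2^2\le\Big(1-\tfrac{1}{c_0^2}\Big)\|\vx\|_2^2\qquad\text{for all }\vx\in N\text{ and all }T\text{ with }|T|=k,
\]
using $\|\vx_T\|_2\le\|\vx_S\|_2$; if $c_0<1$ this forces $N=\{\boldsymbol 0\}$ and $m\ge n$ at once, so I assume $c_0\ge1$.

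Next I would convert this coordinatewise spreading condition into a dimension bound. Fixing an orthonormal basis $\boldsymbol u_1,\dots,\boldsymbol u_d$ of $N$ and letting $\boldsymbol r_i\in\vF^d$ collect the $i$-th coordinates of these basis vectors, the map $\va\mapsto\sum_{i\in T}|x_i|^2$ (for $\vx=\sum_j a_j\boldsymbol u_j$) is a Hermitian quadratic form $\va^{\ast}M_T\va$, where $M_T$ is positive semidefinite of rank at most $k$ and $\mathrm{tr}(M_T)=\sum_{i\in T}\|\boldsymbol r_i\|_2^2$. The displayed inequality says $\lambda_{\max}(M_T)\le 1-1/c_0^2$ for every size-$k$ set $T$, whence $\sum_{i\in T}\|\boldsymbol r_i\|_2^2=\mathrm{tr}(M_T)\le k\,(1-1/c_0^2)$. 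Choosing $T$ to consist of the $k$ indices with largest $\|\boldsymbol r_i\|_2^2$, the left-hand side is at least the scaled average $\tfrac{k}{n}\sum_{i=1}^n\|\boldsymbol r_i\|_2^2=\tfrac{k}{n}\,d$, since $\sum_{i}\|\boldsymbol r_i\|_2^2=\sum_{j}\|\boldsymbol u_j\|_2^2=d$. Combining, $\tfrac{k}{n}\,d\le k\,(1-1/c_0^2)$, i.e. $d\le n\,(1-1/c_0^2)$. As $d\ge n-m$, this gives $n-m\le n\,(1-1/c_0^2)$, that is $m\ge n/c_0^2\gtrsim n$, with constant depending only on $c_0$.

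I expect no serious obstacle once the null-space reduction is in place, since the remainder is identical to the linear case. The only genuinely phaseless point is the first step, where the absolute value collapses $|\vA\vx|$ to $\boldsymbol 0$ on $N$ and the phase ambiguity hidden in $\dist$ and in $\sigma_k$ must be reconciled with the Euclidean quantities; this is where I would be most careful, though it is routine. Intuitively, passing from $\vA\vx$ to $|\vA\vx|$ only introduces additional measurement collisions, so the linear lower bound cannot improve for phaseless measurements.
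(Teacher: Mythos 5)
Your proof is correct, but it takes a genuinely different route from the paper's. The paper exploits a \emph{sign-flip collision}: it decomposes a null vector $\veta=\veta_1+\veta_2+\veta_3$ (top-$k$, next-$k$, rest), notes that $\vA(-\veta_1)=\vA(\veta_2+\veta_3)$ implies $|\vA\veta_1|=|\vA(\veta_2+\veta_3)|$, applies the hypothesis (\ref{eqn: uniform_22_upper}) twice (first to the $k$-sparse $\veta_1$ to pin down $\Delta(|\vA(\veta_2+\veta_3)|)=c_{\veta}\veta_1$ up to phase, then to $\veta_2+\veta_3$) to obtain the null space property $\|\veta\|_2\le c_0\|\veta_{T^c}\|_2$ for all $\#T\le 2k$, and finally invokes Theorem 5.1 of \cite{best} as a black box to conclude $m\gtrsim n$. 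You instead exploit the \emph{trivial collision} $|\vA\vx|=|\vA\boldsymbol 0|=\boldsymbol 0$ on the null space, which gives the (weaker-indexed but sufficient) property $\|\vx\|_2\le c_0\sigma_k(\vx)_2$ for all $\vx\in\mathcal N(\vA)$, and then you prove the dimension bound from scratch: the rank-$\le k$ PSD form $M_T$ with $\lambda_{\max}(M_T)\le 1-1/c_0^2$, the trace bound $\mathrm{tr}(M_T)\le k(1-1/c_0^2)$, and the pigeonhole choice of $T$ give $\dim\mathcal N(\vA)\le n(1-1/c_0^2)$, hence $m\ge n/c_0^2$. What each buys: the paper's argument is shorter modulo the cited lemma and produces the standard $2k$ null space property in the form that compressed-sensing readers expect; yours is fully self-contained (no appeal to \cite{best}), yields an explicit constant $m\ge n/c_0^2$, and isolates cleanly that the only phaseless ingredient needed is that the decoder cannot distinguish null vectors from $\boldsymbol 0$ — your trace argument in effect reproves the cited Lemma in the special case you need. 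Both arguments work verbatim over $\mathbb R$ and $\mathbb C$, and your handling of the phase ambiguity (identifying $\sigma_k$ with its linear counterpart, and $\dist(\boldsymbol 0,\vx)=\|\vx\|_2$) is sound.
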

\begin{proof}
The proof can be found in Section \ref{sec: 22inst_new}. 
\end{proof}}

However, for any fixed $\boldsymbol{x}_0 \in \mathbb{F}^n$, if $\boldsymbol{A} \in \mathbb{F}^{m \times n}$
  is drawn randomly, we can establish that \begin{equation} {\rm dist}(\Delta_1(|\boldsymbol{A}\boldsymbol{x}_0|), \boldsymbol{x}_0) \lesssim \sigma_k(\boldsymbol{x}_0)_2 \end{equation} holds for this particular $\boldsymbol{x}_0$  with high probability, if  $m\gtrsim k\log(en/k)$. This result addresses Question iii. Notably, similar results have been obtained for linear measurements in \cite {L1222,L1221}.

\begin{theorem}\label{th:22}
Let $\boldsymbol{x}_0 \in \mathbb{F}^n$ be an fixed vector. Assume that $\boldsymbol{A} \in \mathbb{F}^{m \times n}$ is a standard Gaussian random matrix. Let $k \in \{1,\ldots,n\}$ be an integer such that $C k \log\left({en}/{k}\right) \leq m$ for some universal positive constant $C$. The following result holds with probability at least
$1 - \widetilde{c}_3 \exp(-\widetilde{c}_2 m) - \exp(-\sqrt{mn}) - 2m \exp\left(-\frac{\widetilde{c}_1 m}{\log(en/m)}\right):$
\[\mathrm{dist}(\Delta_1(|\boldsymbol{A} \boldsymbol{x}_0|), \boldsymbol{x}_0) \lesssim \sigma_k(\boldsymbol{x}_0)_2.\]
Here, $\widetilde{c}_1$, $\widetilde{c}_2$, and $\widetilde{c}_3$ are also universal positive constants.
\end{theorem}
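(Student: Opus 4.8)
The plan is to run a non-uniform argument, mirroring the non-uniform $(2,2)$ results for linear measurements in \cite{L1221,L1222}, that reduces the fixed-vector problem to the uniform phaseless $(2,1)$-instance optimality already established in Corollary \ref{th: main}; the extra leverage comes from the Gaussian quotient property and from concentration of the tail measurement for the \emph{fixed} vector $\vx_0$. Write $\vx_0 = \vx_0^{\ast} + \vx_0^{c}$, where $\vx_0^{\ast}$ is a best $k$-term approximation of $\vx_0$ (support $S$, $|S|=k$) and $\sigma := \sigma_k(\vx_0)_2 = \|\vx_0^{c}\|_2$. Set $\hat\vx := \Delta_1(|\vA\vx_0|)$, so that $|\vA\hat\vx| = |\vA\vx_0|$ and $\|\hat\vx\|_1 \le \|\vx_0\|_1$. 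The difficulty is that a naive comparison of $\hat\vx$ with $\vx_0^{\ast}$ only controls the error by $\sigma_k(\vx_0)_1/\sqrt{k}$ (the uniform $(2,1)$ bound), which can be far larger than $\sigma_k(\vx_0)_2$; Proposition \ref{thm:bad_22} shows this gap is unavoidable uniformly, so the fixed-vector randomness must be exploited.

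I would combine three high-probability events on $\vA$. First, by Theorem \ref{th:lip} (with $r=10$ as in Remark \ref{r_rem}), $\vA$ satisfies the phaseless bi-Lipschitz condition on $\mathcal{X}=\{\boldsymbol x : \|\boldsymbol x\|_0 \le (r+4)k\}$ with $U/L$ bounded, on an event of probability $1-\widetilde c_3\exp(-\widetilde c_2 m)$; this is what powers Corollary \ref{th: main}. Second, since $\vx_0^{c}$ is fixed, a Gaussian concentration estimate gives $\|\vA\vx_0^{c}\|_2 \le C\sqrt{m}\,\sigma$, which I expect to contribute the $\exp(-\sqrt{mn})$ term; this is precisely where non-uniformity enters, as a worst-case tail would only yield $\sqrt{n}\,\sigma$. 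Third, I would invoke the simultaneous $(\ell_2,\ell_1)$-quotient property of Gaussian matrices, which I expect to hold with probability $1 - 2m\exp(-\widetilde c_1 m/\log(en/m))$: for every $\ve \in \mathbb{F}^m$ there is a preimage $\boldsymbol u$ with $\vA\boldsymbol u = \ve$, $\|\boldsymbol u\|_1 \lesssim \sqrt{s_\ast}\,\|\ve\|_2/\sqrt{m}$ and $\|\boldsymbol u\|_2 \lesssim \|\ve\|_2/\sqrt{m}$, where $s_\ast := m/\log(en/m) \asymp k$. Applying this to $\ve = \vA\vx_0^{c}$ and using the second event yields $\boldsymbol u$ with $\vA\boldsymbol u = \vA\vx_0^{c}$, $\|\boldsymbol u\|_1 \lesssim \sqrt{k}\,\sigma$ and $\|\boldsymbol u\|_2 \lesssim \sigma$.

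The key maneuver is then to replace $\vx_0$ by the surrogate signal $\vx_0' := \vx_0^{\ast} + \boldsymbol u$. Since $\vA\vx_0' = \vA\vx_0^{\ast} + \vA\boldsymbol u = \vA\vx_0$, the two have identical phaseless measurements, $|\vA\vx_0'| = |\vA\vx_0|$, and hence $\hat\vx = \Delta_1(|\vA\vx_0'|)$; crucially, no \emph{phaseless} quotient property is needed, because the surrogate matches $\vx_0$ already at the linear level and the only phaseless ingredient is Corollary \ref{th: main}. Now $\vx_0'$ has best $k$-term $\vx_0^{\ast}$ with tail $\|\vx_0' - \vx_0^{\ast}\|_1 = \|\boldsymbol u\|_1 \lesssim \sqrt{k}\,\sigma$, so $\sigma_k(\vx_0')_1 \le \dist_1(\vx_0',\vx_0^{\ast}) \le \|\boldsymbol u\|_1 \lesssim \sqrt{k}\,\sigma$. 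Applying the $(2,1)$ bound of Corollary \ref{th: main} (inequality (\ref{eqn: b}) with $\eta=0$) to the vector $\vx_0'$ gives
\[
\dist(\hat\vx, \vx_0') \;\le\; C_2\,\frac{\sigma_k(\vx_0')_1}{\sqrt{k}} \;\lesssim\; \sigma .
\]
Finally, the triangle inequality for $\dist$ together with the $\ell_2$-quotient bound closes the estimate:
\[
\dist(\hat\vx,\vx_0) \le \dist(\hat\vx,\vx_0') + \|\vx_0' - \vx_0\|_2 = \dist(\hat\vx,\vx_0') + \|\boldsymbol u - \vx_0^{c}\|_2 \le \dist(\hat\vx,\vx_0') + \|\boldsymbol u\|_2 + \sigma \lesssim \sigma = \sigma_k(\vx_0)_2 .
\]

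The main obstacle I anticipate is establishing the simultaneous $(\ell_2,\ell_1)$-quotient property in the complex Gaussian setting with the stated failure probability, and in particular producing a single preimage $\boldsymbol u$ that is simultaneously small in $\ell_1$ (to feed Corollary \ref{th: main}) and in $\ell_2$ (to bound $\|\vx_0'-\vx_0\|_2$); the $\ell_2$ control is essential, since $\|\boldsymbol u\|_1 \lesssim \sqrt{k}\,\sigma$ alone would only give $\|\boldsymbol u\|_2 \lesssim \sqrt{k}\,\sigma$ and break the final step. The remaining work is bookkeeping: choosing $r$ and the order $(r+4)k$ so that $s_\ast \asymp k$ is compatible with the bi-Lipschitz hypothesis $m \gtrsim k\log(en/k)$, and intersecting the three events to recover the stated probability.
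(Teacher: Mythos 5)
Your overall strategy is essentially the paper's own: combine the uniform phaseless $(2,1)$-instance optimality of Corollary \ref{(2,1) and (1,1)} with a Gaussian quotient property and a fixed-vector concentration bound, build a surrogate signal $\vx_0' = \vx_0^{\ast} + \boldsymbol{u}$ satisfying $\vA\vx_0' = \vA\vx_0$ (hence identical phaseless data, so no phaseless quotient property is needed), and close with the triangle inequality. This is exactly what Section \ref{sec: 22inst2} does, with Lemma \ref{Gaussian_LQ} playing the role of your second event and Lemma \ref{construction of x} the role of your third (note, incidentally, that the probability terms are attributed the other way around: $\exp(-\sqrt{mn})$ comes from the quotient lemma, and $2m\exp(-\widetilde{c}_1 m/\log(en/m))$ from the $\ell_\infty$ concentration of $\vA(\vx_0)_{T_0^c}$ for the fixed vector).

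There is, however, a genuine gap at the point where you pass from $\|\boldsymbol{u}\|_1 \lesssim \sqrt{s_\ast}\,\sigma$ to $\|\boldsymbol{u}\|_1 \lesssim \sqrt{k}\,\sigma$: the claim $s_\ast := m/\log(en/m) \asymp k$ is false in general. The hypothesis only gives $Ck\log(en/k)\leq m$, i.e.\ $k \lesssim s_\ast$; nothing prevents $m$ from being far larger than $k\log(en/k)$ (take $k=1$ and $m = n/2$, so $s_\ast \asymp n/\log n$). Running your argument at sparsity level $k$ then yields only $\mathrm{dist}(\hat{\vx},\vx_0') \lesssim \sqrt{s_\ast/k}\;\sigma_k(\vx_0)_2$, with an unbounded factor $\sqrt{s_\ast/k}$; this cannot be repaired by tuning $r$. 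The missing idea is to run the entire argument at a \emph{larger} sparsity level: the paper defines $k_0$ as the largest $s \leq en/(16C^2)$ with $Cs\log(en/s)\leq m$, so that $k_0 \geq k$, instance optimality of order $k_0$ is still valid, and (this is inequality (\ref{eqn: k_0}), which requires its own proof) $m \lesssim k_0 \log(en/m)$, i.e.\ $k_0 \gtrsim s_\ast$. One then takes the best $k_0$-term approximation of $\vx_0$, applies Corollary \ref{(2,1) and (1,1)} at order $k_0$ to get $\mathrm{dist}(\hat{\vx},\vx_0') \lesssim \sqrt{s_\ast/k_0}\,\sigma_{k_0}(\vx_0)_2 \lesssim \sigma_{k_0}(\vx_0)_2$, and only at the end uses monotonicity $\sigma_{k_0}(\vx_0)_2 \leq \sigma_k(\vx_0)_2$. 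Relatedly, your third event (the simultaneous quotient property) is only available when $n$ is sufficiently larger than $m$ (Lemma \ref{construction of x} needs $n \geq (\log 6)^2 m$), so the regimes $k > en/(16C^2)$ and $n \leq (\log 6)^2 m$ require the separate, easier arguments of the paper's Case 1 and Subcase i, where $k_0 \gtrsim n$ and the crude bound $\sigma_{k_0}(\cdot)_1/\sqrt{k_0} \lesssim \sigma_{k_0}(\cdot)_2$ suffices; your proposal does not cover these cases.
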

\begin{proof}
The proof can be found in Section \ref{sec: 22inst2}. 
\end{proof}

\begin{remark}
All the aforementioned results are related to phaseless measurements. In  \cite{group}, a nonlinear encoder referred to as {\em max filters} was introduced, which includes phaseless encoders as a special case. The bi-Lipschitz properties of max filters have been thoroughly investigated in \cite{balan}. Extending the above results to the max filters encoder would be an intriguing endeavor. We suggest this as a promising direction for future research.
\end{remark}

\section{Proof of Theorem \ref{th:main11}  }\label{proof of Thm3}
The following lemma is essential to our argument. We will postpone its proof until the end of this section.
\begin{lem}\label{temp_lem}
Let $\boldsymbol{x},\boldsymbol{y},\boldsymbol{z}\in \mathbb{F}^n$. Then the following inequality holds: 
\begin{equation}\label{eq:budeng}
\||\boldsymbol{x}|-|\boldsymbol{z}|\|_2-\|\boldsymbol{y}\|_2\leq \left\| \abs{\boldsymbol{x}+\boldsymbol{y}}-|\boldsymbol{z}|\right\|_2\leq \||\boldsymbol{x}|-|\boldsymbol{z}|\|_2+\|\boldsymbol{y}\|_2. 
\end{equation}
\end{lem}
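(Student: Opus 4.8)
The plan is to reduce everything to two elementary facts: the triangle inequality in $\ell_2$ and the observation that the entrywise modulus map $\boldsymbol{u}\mapsto \abs{\boldsymbol{u}}$ is $1$-Lipschitz on $\mathbb{F}^n$ with respect to $\|\cdot\|_2$. Concretely, I would first record the auxiliary inequality
\[
\big\|\abs{\boldsymbol{u}}-\abs{\boldsymbol{v}}\big\|_2\leq \|\boldsymbol{u}-\boldsymbol{v}\|_2 \qquad \text{for all } \boldsymbol{u},\boldsymbol{v}\in\mathbb{F}^n.
\]
This follows coordinatewise from the scalar reverse triangle inequality $\big|\abs{a}-\abs{b}\big|\leq \abs{a-b}$, valid for $a,b\in\mathbb{F}$ with $\mathbb{F}\in\{\mathbb{R},\mathbb{C}\}$; squaring and summing over the $n$ coordinates yields the displayed $\ell_2$ bound.

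For the upper bound in (\ref{eq:budeng}), I would insert $\abs{\boldsymbol{x}}$ and apply the triangle inequality:
\[
\big\|\abs{\boldsymbol{x}+\boldsymbol{y}}-\abs{\boldsymbol{z}}\big\|_2\leq \big\|\abs{\boldsymbol{x}+\boldsymbol{y}}-\abs{\boldsymbol{x}}\big\|_2+\big\|\abs{\boldsymbol{x}}-\abs{\boldsymbol{z}}\big\|_2.
\]
The first term on the right is at most $\|(\boldsymbol{x}+\boldsymbol{y})-\boldsymbol{x}\|_2=\|\boldsymbol{y}\|_2$ by the auxiliary Lipschitz inequality (taking $\boldsymbol{u}=\boldsymbol{x}+\boldsymbol{y}$ and $\boldsymbol{v}=\boldsymbol{x}$), which immediately gives the right-hand inequality of (\ref{eq:budeng}).

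For the lower bound, I would run the triangle inequality in the other direction, now starting from $\big\|\abs{\boldsymbol{x}}-\abs{\boldsymbol{z}}\big\|_2$:
\[
\big\|\abs{\boldsymbol{x}}-\abs{\boldsymbol{z}}\big\|_2\leq \big\|\abs{\boldsymbol{x}}-\abs{\boldsymbol{x}+\boldsymbol{y}}\big\|_2+\big\|\abs{\boldsymbol{x}+\boldsymbol{y}}-\abs{\boldsymbol{z}}\big\|_2\leq \|\boldsymbol{y}\|_2+\big\|\abs{\boldsymbol{x}+\boldsymbol{y}}-\abs{\boldsymbol{z}}\big\|_2,
\]
and rearranging yields the left-hand inequality. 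I do not expect a genuine obstacle here: the entire argument is symmetric in the roles of the upper and lower estimates, and the only point requiring any care is the Lipschitz property of the modulus map, in particular checking that it holds verbatim in the complex case $\mathbb{F}=\mathbb{C}$. This indeed holds, since $\big|\abs{a}-\abs{b}\big|\leq\abs{a-b}$ is precisely the reverse triangle inequality for the complex modulus, so no separate treatment of the real and complex cases is needed.
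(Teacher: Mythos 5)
Your proof is correct, but it is organized differently from the paper's. The paper works at the scalar level: it first proves the composite claim $\bigl||a+b|-|c|\bigr|\leq \bigl||a|-|c|\bigr|+|b|$ for $a,b,c\in\mathbb{F}$ by a two-case analysis (according to whether $|a+b|\geq |c|$), applies it coordinatewise, and then needs one application of Minkowski's inequality to pass from $\bigl\|\,\bigl||\vx|-|\vz|\bigr|+|\vy|\,\bigr\|_2$ to $\bigl\||\vx|-|\vz|\bigr\|_2+\|\vy\|_2$; the left-hand inequality is then deduced by the substitution $\vx\mapsto\vx+\vy$, $\vy\mapsto-\vy$. You instead work at the vector level: you isolate the $1$-Lipschitz property of the entrywise modulus map, $\bigl\||\boldsymbol{u}|-|\boldsymbol{v}|\bigr\|_2\leq\|\boldsymbol{u}-\boldsymbol{v}\|_2$, after which both halves of (\ref{eq:budeng}) become single applications of the $\ell_2$ triangle inequality with $|\vx|$ inserted as an intermediate point (your rearrangement for the lower bound plays exactly the same role as the paper's substitution trick). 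Both arguments ultimately rest on the same elementary fact, the scalar reverse triangle inequality, so neither is deeper than the other; what your decomposition buys is modularity --- no case analysis, no separate Minkowski step, and a reusable Lipschitz lemma --- while the paper's scalar claim packages precisely the inequality needed and passes to the norm bound in one sweep.
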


We next present the proof of  Theorem \ref{th:main11}.

\begin{proof}[Proof of Theorem \ref{th:main11}]

Assume that $T_0$ denotes the index set of the $k$ largest elements of $\boldsymbol{x}$ in magnitude. Similarly, $T_1$ represents the index set of the $k$ largest elements of $\boldsymbol{x}_{T_0^c}$ in magnitude. This pattern can be extended to subsequent index sets $T_2, T_3, \ldots$. For convenience, define $T_{01} := T_0 \cup T_1$ and let $\boldsymbol{x}^\# := \Delta_{p}(|\boldsymbol{A}\vx|+\ve)$. Without loss of generality, it is assumed that
\[\langle \boldsymbol{x}^\#, \boldsymbol{x}_{T_{01}} \rangle \geq 0.\]

For convenience, denote 
\[\boldsymbol{h} := \boldsymbol{x}^\# - \boldsymbol{x}_{T_{01}}.\] Let $S_0 := T_{01} = T_0 \cup T_1$. Define $S_1$ as the index set of the $rk$ largest elements of $\boldsymbol{h}_{S_0^c}$ in magnitude, and extend this process iteratively to define $S_2, S_3, \ldots$ accordingly, where $r$ satisfies the condition (\ref{r_constraint}). Let $S_{01} := S_0 \cup S_1$.

We now proceed to estimate the upper bounds of $\text{dist}_p(\boldsymbol{x}^{\#}, \boldsymbol{x})$ for $p\in (0,1]$ and $\text{dist}(\boldsymbol{x}^{\#}, \boldsymbol{x})$. The ensuing proof is structured into four distinct steps.

\vspace{0.1cm}
{\textbf{Step 1: 
Establish  upper bounds for $(\text{dist}_p(\boldsymbol{x}^{\#}, \boldsymbol{x}))^p$ using $\|\vh_{S_{01}}\|_p^p$ and $\|\vh_{S_{01}^c}\|_p^p$, and for $\text{dist}(\boldsymbol{x}^{\#}, \boldsymbol{x})$ using $\|\vh_{S_{01}}\|_2$ and $\|\vh_{S_{01}^c}\|_2$, respectively}}.

Utilizing the definition of $\boldsymbol{h}$ in relation to the index sets $S_{01}$ and $T_{01}$, and leveraging the inequality $\|\boldsymbol{u} + \boldsymbol{v}\|_p^p \leq \|\boldsymbol{u}\|_p^p + \|\boldsymbol{v}\|_p^p$ for any $\boldsymbol{u}, \boldsymbol{v} \in \mathbb{F}^n$, we can readily obtain the following results:
\begin{align}
&(\text{dist}_p(\boldsymbol{x}^{\#},\boldsymbol{x}))^p\leq  \|\vh\|_p^p+\sigma_{2k}(\vx)_p^p\leq  \|\vh_{S_{01}}\|_p^p+\|\vh_{S_{01}^c}\|_p^p+\sigma_{2k}(\vx)_p^p\label{final1},\\
&\text{dist}(\boldsymbol{x}^{\#},\boldsymbol{x})\leq  \|\vh\|_2+\|\vx_{T_{01}^c}\|_2\overset{(a)}\leq \|\vh\|_2+\frac{\sigma_{k}(\vx)_p}{{k}^{1/p-1/2}}\leq \|\vh_{S_{01}}\|_2+\|\vh_{S_{01}^c}\|_2+\frac{\sigma_{k}(\vx)_p}{{k}^{1/p-1/2}}\label{final2},
\end{align}
where $(a)$ is based on 
\begin{equation}\label{key1}
\begin{aligned}
\|\vx_{T_{01}^c}\|_2&\leq  \sum_{j\geq 2}\|\vx_{T_j}\|_2\leq \sum_{j\geq 1}\frac{\|\vx_{T_j}\|_p}{k^{1/p-1/2}}\\
&\leq \frac{(\sum_{j\geq 1}\|\vx_{T_j}\|_p^p)^{1/p}}{k^{1/p-1/2}} \leq \frac{\|\vx_{T_0^c}\|_p}{k^{1/p-1/2}}=\frac{\sigma_k(\vx)_p}{k^{1/p-1/2}}.
\end{aligned}
\end{equation}
The inequality $\sum_{j \geq 1} \|\vx_{T_j}\|_p \leq \left( \sum_{j \geq 1} \|\vx_{T_j}\|_p^p \right)^{1/p}$ follows from the inequality
\begin{equation}\label{eqn: u_1p(a)} \|\boldsymbol{u}\|_1 \leq \|\boldsymbol{u}\|_p \quad \text{for any vector } \boldsymbol{u},\end{equation}
and define the $j$-th element of the vector $\boldsymbol{u}$ as $u_j = \|\vx_{T_j}\|_p$, for $j \geq 1$.

 Based on equations (\ref{final1}) and (\ref{final2}), we need to estimate the following quantities
 $\|\vh_{S_{01}}\|_p^p$, $\|\vh_{S_{01}^c}\|_p^p$, $\|\vh_{S_{01}}\|_2$ and $\|\vh_{S_{01}^c}\|_2$. 

\vspace{0.1cm}

{\textbf{Step 2: Dertermine the relationships between $\|\vh_{S_{01}^c}\|_p^p$ and $\|\vh_{S_{01}}\|_p^p$, and between $\|\vh_{S_{01}^c}\|_2$ and $\|\vh_{S_{01}}\|_2$}}.

We will demonstrate that
 \begin{equation}\label{eq:hs01c1}
 \begin{aligned}
&\|\vh_{S_{01}^c}\|_p^p\leq \|\vh_{S_{01}}\|_p^p+\sigma_{2k}{(\vx)}_p^p,\\
& \|\vh_{S_{01}^c}\|_2\leq  2^{1/p-1}\cdot \frac{2^{1/p-1/2}}{r^{1/p-1/2}}\cdot\|\vh_{S_{01}}\|_2+\frac{2^{1/p-1}}{r^{1/p-1/2}}\cdot \frac{\sigma_k(\vx)_p}{k^{1/p-1/2}}.
\end{aligned}
\end{equation}
Given that $\|\vx^\#\|_p^p\leq \|\vx\|_p^p$, we can deduce the following:
\[
\|\vx^\#_{S_{01}}\|_p^p+\|\vx^\#_{S_{01}^c}\|_p^p=\|\vx^\#_{S_{01}}+\vx^\#_{S_{01}^c}\|_p^p= \|\vx^\#\|_p^p\leq \|\vx\|_p^p= \|\vx_{S_{01}}\|_p^p+\|\vx_{S_{01}^c}\|_p^p,
\]
which consequently implies:
\begin{equation}\label{temp_comb11}
\begin{aligned}
\|\vh_{S_{01}^c}\|_p^p=\|\vx^\#_{S_{01}^c}\|_p^p&\leq   \|\vx_{S_{01}}\|_p^p-\|\vx^\#_{S_{01}}\|_p^p+\|\vx_{S_{01}^c}\|_p^p\\
&\leq \|\vx^\#_{S_{01}}-\vx_{S_{01}}\|_p^p+\|\vx_{S_{01}^c}\|_p^p\\
&\leq \|\vh_{S_{01}}\|_p^p+\sigma_{2k}(\vx)_p^p,
\end{aligned}
\end{equation}
where we also utilize the fact that $\|\boldsymbol{u}+\boldsymbol{v}\|_p^p\leq \|\boldsymbol{u}\|_p^p+\|\boldsymbol{v}\|_p^p$ for any $\boldsymbol{u},\boldsymbol{v}\in \mathbb{F}^n$.
Similarly, by substituting $S_{01}$ with $S_{0}$ in (\ref{temp_comb11}), we can establish the following inequality:
\begin{equation}\label{eq:bestkterm}
\|\vh_{S_0^c}\|_p^p\leq \|\vh_{S_0}\|_p^p+\sigma_{2k}{(\vx)}_p^p.
\end{equation}
Furthermore, since 
\begin{equation}\label{eqn: d}
\begin{split}
\|\vh_{S_{01}^c}\|_2\leq \sum_{j\geq 2}\|\vh_{S_j}\|_2\leq \sum_{j\geq 1}\frac{\|\vh_{S_j}\|_p}{(rk)^{1/p-1/2}}\leq \frac{(\sum_{j\geq 1}\|\vh_{S_j}\|_p^p)^{1/p}}{(rk)^{1/p-1/2}} \leq \frac{\|\vh_{S_0^c}\|_p}{(rk)^{1/p-1/2}},
\end{split}
\end{equation}
we can derive:
\begin{equation}\label{temp_comb12}
\begin{aligned}
\|\vh_{S_{01}^c}\|_2\leq&\sum_{j\geq 2}\|\vh_{S_j}\|_2\leq  \frac{\|\vh_{S_0^c}\|_p}{(rk)^{1/p-1/2}}\leq \frac{(\|\vh_{S_0}\|_p^p+\sigma_{2k}{(\vx)}_p^p)^{1/p}}{(rk)^{1/p-1/2}}\\
\leq&  2^{1/p-1}\cdot \frac{2^{1/p-1/2}}{r^{1/p-1/2}}\|\vh_{S_{01}}\|_2+\frac{2^{1/p-1}}{r^{1/p-1/2}}\cdot \frac{\sigma_k(\vx)_p}{k^{1/p-1/2}}.
\end{aligned}
\end{equation}
The last inequality above stems from  the following properties:
\begin{equation}
\label{u_p and u_2 temp}
\|\boldsymbol{u}\|_p\leq n^{1/p-1}\|\boldsymbol{u}\|_1\qquad \text{and}\qquad \|\boldsymbol{u}\|_p\leq n^{1/p-1/2}\|\boldsymbol{u}\|_2,\qquad \text {for any } \boldsymbol{u}\in \mathbb{F}^n.
\end{equation} 
Therefore, (\ref{temp_comb11}) and (\ref{temp_comb12}) substantiate the results presented in (\ref{eq:hs01c1}).

\vspace{0.1cm}

{\textbf{Step 3: Derive upper bounds for $\|\vh_{S_{01}}\|_p^p$ and $\|\vh_{S_{01}}\|_2$}}.

We shall establish that
\begin{equation}\label{eq:hs01c2}
\|\vh_{S_{01}}\|_p^p\leq \widetilde{C}_1\cdot \sigma_k(\vx)_p^p+\widetilde{D}_1\cdot k^{1-p/2}\cdot \eta^p,\quad\quad \|\vh_{S_{01}}\|_2\leq \widetilde{C}_2\cdot \frac{\sigma_k(\vx)_p}{{k}^{1/p-1/2}}+\widetilde{D}_2\cdot \eta,
\end{equation}
where 
\begin{equation}\label{c1,c2}
\widetilde{C}_1=(2+r)^{1-p/2}\cdot (\widetilde{C}_2)^p,\quad \text{with} \quad \widetilde{C}_2=\frac{U\cdot \left(\frac{2^{1/p-1}}{r^{1/p-1/2}}+1\right)}{L-U\cdot2^{1/p-1}\cdot\left(\frac{{2}}{{r}}\right)^{1/p-1/2}},
\end{equation}
and 
\begin{equation}\label{d1,d2}
\widetilde{D}_1={(2+r)}^{1-p/2} \cdot (\widetilde{D}_2)^p,\quad \text{with}\quad \widetilde{D}_2=\frac{2}{L-U\cdot2^{1/p-1}\cdot\left(\frac{{2}}{{r}}\right)^{1/p-1/2}}.
\end{equation}

We begin with the following inequalities:
\[
\||\vA\vx^{\#}|-|\vA\vx|\|_2\leq \||\vA\vx^{\#}|-|\vA\vx|-\ve\|_2+\|\ve\|_2\leq 2\eta,
\]
which implies that
\begin{equation}\label{eq:budength}
\begin{aligned}
2\eta\geq \|\abs{\vA\vx^\#}-\abs{\vA\vx}\|_2&=\|\abs{\vA\vx^\#_{S_{01}}+\vA\vx^\#_{S_{01}^c}}-\abs{\vA\vx_{T_{01}}+\vA\vx_{T_{01}^c}}\|_2\\
&\overset{(c)}{\geq} \|\abs{\vA\vx^\#_{S_{01}}}-\abs{\vA\vx_{T_{01}}}\|_2-\|\vA\vx_{T_{01}^c}\|_2-\|\vA\vx^\#_{S_{01}^c}\|_2\\
&\geq L \|\vh_{S_{01}}\|_2-\|\vA\vx_{T_{01}^c}\|_2-\|\vA\vx^\#_{S_{01}^c}\|_2,
\end{aligned}
\end{equation}
 where inequality $(c)$ is derived by repeatedly applying Lemma \ref{temp_lem}, which states that for any $\vx,\vy,\boldsymbol{s},\boldsymbol{t}\in \mathbb{F}^n$,
 \[
 \||\vx+\vy|-|\boldsymbol{s}+\boldsymbol{t}|\|_2\geq \||\boldsymbol{x}+\boldsymbol{y}|-|\boldsymbol{s}|\|_2-\|\boldsymbol{t}\|_2\geq \||\boldsymbol{x}|-|\boldsymbol{s}|\|_2-\|\boldsymbol{y}\|_2-\|\boldsymbol{t}\|_2.
 \] 
Consequently, according to (\ref{eq:budength}), we arrive at the following inequality: 
\begin{equation}\label{eq:vaxs}
\|\vA\vx_{T_{01}^c}\|_2+\|\vA\vx^\#_{S_{01}^c}\|_2+2\eta \geq L \|\vh_{S_{01}}\|_2.
\end{equation}
We will now proceed to analyze the terms $\|\vA\vx_{T_{01}^c}\|_2$ and $\|\vA\vx^\#_{S_{01}^c}\|_2$. By leveraging the results from equation (\ref{key1}) in conjunction with the phaseless bi-Lipschitz condition, we can establish the following inequality:
\begin{equation}\label{eq:atterm1}
\begin{aligned}
\|\vA\vx_{T_{01}^c}\|_2\leq \sum_{j\geq 2} \|\vA \vx_{T_j}\|_2\leq U\sum_{j\geq 2} \| \vx_{T_j}\|_2 \leq \frac{U\cdot \sigma_k(\vx)_p}{k^{1/p-1/2}}.
\end{aligned}
\end{equation}
Similarly, we can derive
\begin{equation}\label{eq:aterm2}
\begin{aligned}
\|\vA\vx^\#_{S_{01}^c}\|_2=&\|\vA\vh_{S_{01}^c}\|_2\leq \sum_{j\geq 2} \|\vA\vh_{S_j}\|_2\leq  U \sum_{j\geq 2}\|\vh_{S_j}\|_2\\
\overset{(d)}\leq &U\cdot2^{1/p-1}\cdot  \frac {2^{1/p-1/2}}{r^{1/p-1/2}}\|\vh_{S_{01}}\|_2+U\cdot\frac{2^{1/p-1}}{r^{1/p-1/2}}\cdot \frac{\sigma_k(\vx)_p}{k^{1/p-1/2}},
\end{aligned}
\end{equation}
where ($d$) follows from (\ref{temp_comb12}). 
By substituting the expressions from equations (\ref{eq:atterm1}) and (\ref{eq:aterm2}) into equation (\ref{eq:vaxs}), we derive the following inequality:
\[
\begin{split}
 L\|\vh_{S_{01}}\|_2\leq & \frac{U\cdot \sigma_k(\vx)_p}{k^{1/p-1/2}}+U\cdot2^{1/p-1}\cdot \frac {2^{1/p-1/2}}{r^{1/p-1/2}}\|\vh_{S_{01}}\|_2+U\cdot\frac{2^{1/p-1}}{r^{1/p-1/2}}\cdot \frac{\sigma_k(\vx)_p}{k^{1/p-1/2}}+2\eta\\
 \leq &U\cdot \left(\frac{2^{1/p-1}}{r^{1/p-1/2}}+1\right)\cdot \frac{\sigma_k(\vx)_p}{{k}^{1/p-1/2}}+U\cdot2^{1/p-1}\cdot\left(\frac{{2}}{{r}}\right)^{1/p-1/2}\|\vh_{S_{01}}\|_2+2\eta,
 \end{split}
\]
which implies
\begin{equation}\label{temp_comb21}
\|\vh_{S_{01}}\|_2\leq \widetilde{C}_2 \cdot \frac{\sigma_k(\vx)_p}{{k}^{1/p-1/2}}+\widetilde{D}_2\cdot \eta, 
\end{equation}
and 
\begin{equation}\label{temp_comb22}
\begin{split}
\|\vh_{S_{01}}\|_p^p\overset{(e)}\leq& {((2+r)k)}^{1-p/2} \cdot \|\vh_{S_{01}}\|_2^p\leq {((2+r)k)}^{1-p/2}\cdot ( \widetilde{C}_2 \cdot \frac{\sigma_k(\vx)_p}{{k}^{1/p-1/2}}+\widetilde{D}_2\cdot \eta)^p\\
\overset{(f)}\leq& (2+r)^{1-p/2}\cdot (\widetilde{C}_2)^p\cdot \sigma_k(\vx)_p^p+{((2+r)k)}^{1-p/2} \cdot (\widetilde{D}_2)^p\cdot \eta^p\\
=&\widetilde{C}_1\cdot \sigma_k(\vx)_p^p+\widetilde{D}_1\cdot k^{1-p/2}\cdot \eta^p.
\end{split}
\end{equation}
In this context, the inequalities ($e$) and ($f$) are derived from (\ref{u_p and u_2 temp}) and (\ref{eqn: u_1p(a)}), respectively. The constants $\widetilde{C}_1$, $\widetilde{C}_2$, $\widetilde{D}_1$, and $\widetilde{D}_2$ are defined in (\ref{c1,c2}) and (\ref{d1,d2}). Thus, the relationships delineated in (\ref{temp_comb21}) and (\ref{temp_comb22}) substantiate the conclusions articulated in (\ref{eq:hs01c2}).

\vspace{0.1cm}

{\textbf{Step 4:  
Derive the desired conclusion}}.

By substituting (\ref{eq:hs01c1}) and (\ref{eq:hs01c2}) into (\ref{final1}) and (\ref{final2}), and applying the inequality $(a^p + b^p)^{1/p} \leq 2^{1/p - 1}(a + b)$ for any $a, b \geq 0$, we derive the desired conclusions, specifically:
 \[
 \begin{aligned}
 \text{dist}_p(\boldsymbol{x}^{\#},\boldsymbol{x})&\leq (\|\vh_{S_{01}}\|_p^p+\|\vh_{S_{01}^c}\|_p^p+\sigma_{2k}(\vx)_p^p)^{1/p}\leq (2\|\vh_{S_{01}}\|_p^p+2\sigma_{k}(\vx)_p^p)^{1/p}\\
 &\leq ((2\widetilde{C}_1+2)\cdot \sigma_k(\vx)_p^p+\widetilde{D}_1\cdot k^{1-p/2}\cdot \eta^p)^{1/p}\\
&\leq 2^{1/p-1}\cdot \left((2\widetilde{C}_1+2)^{1/p}\cdot \sigma_k(\vx)_p+\widetilde{D}_1^{1/p}\cdot k^{1/p-1/2}\cdot \eta\right)\\
 &=C_1\cdot \sigma_k(\vx)_p+D_1\cdot k^{1/p-1/2}\cdot \eta
 \end{aligned}
 \]
 and 
 \[
 \begin{aligned}\text{dist}(\boldsymbol{x}^{\#},\boldsymbol{x})\leq& \|\vh_{S_{01}}\|_2+\|\vh_{S_{01}^c}\|_2+\frac{\sigma_{k}(\vx)_p}{{k}^{1/p-1/2}}\\
 \leq & \Big(1+\frac{2^{1/p-1}}{(r/2)^{1/p-1/2}}\Big)\cdot \|\vh_{S_{01}}\|_2+ \Big(1+\frac{2^{1/p-1}}{r^{1/p-1/2}}\Big)\cdot \frac{\sigma_k(\vx)_p}{k^{1/p-1/2}}\\
 \leq &\Big(1+\frac{2^{1/p-1}}{(r/2)^{1/p-1/2}}\Big)\cdot \left( \widetilde{C}_2\cdot \frac{\sigma_k(\vx)_p}{{k}^{1/p-1/2}}+\widetilde{D}_2\cdot \eta\right)+\Big(1+\frac{2^{1/p-1}}{r^{1/p-1/2}}\Big)\cdot \frac{\sigma_k(\vx)_p}{k^{1/p-1/2}}\\
 =& C_2\cdot  \frac{\sigma_k(\vx)_p}{k^{1/p-1/2}}+D_2\cdot \eta,
 \end{aligned}
 \]
 where 
 \[
C_1=2^{1/p-1}\cdot(2\widetilde{C}_1+2)^{1/p},\qquad C_2=\Big(1+\frac{2^{1/p-1}}{(r/2)^{1/p-1/2}}\Big)\cdot \widetilde{C}_2+\frac{2^{1/p-1}}{r^{1/p-1/2}}+1,
 \] 
 and
 \[
 D_1=2^{1/p-1}\cdot \widetilde{D}_1^{1/p},\qquad D_2=\Big(1+\frac{2^{1/p-1}}{(r/2)^{1/p-1/2}}\Big)\cdot \widetilde{D}_2.
 \]
 Here $\widetilde{C}_1$, $\widetilde{C}_2$, $\widetilde{D}_1$ and $\widetilde{D}_2$ are defined in (\ref{c1,c2}) and (\ref{d1,d2}). 
\end{proof}
\begin{proof}[Proof of Lemma \ref{temp_lem}]
We shall begin by establishing the right-hand side inequality of (\ref{eq:budeng}), namely:
\begin{equation}\label{eq:youbian}
\left\| \abs{\boldsymbol{x}+\boldsymbol{y}}-|\boldsymbol{z}|\right\|_2\leq \left\||\boldsymbol{x}|-|\boldsymbol{z}|\right\|_2+\|\boldsymbol{y}\|_2.
\end{equation} 
We claim that for any $a, b, c \in \mathbb{F}$,
\begin{equation}\label{claim_temp}
||a+b|-|c||\leq ||a|-|c||+|b|. 
\end{equation}
Let $x_k$, $y_k$, and $z_k$  denote the $k$th elements of $\boldsymbol{x}$, $\boldsymbol{y}$, and $\boldsymbol{z}$, respectively, where $k\in\{1,\ldots,n\}$. We can then derive:
\[
\begin{aligned}
\||\boldsymbol{x}+\boldsymbol{y}|-|\boldsymbol{z}|\|_2
=&\sqrt{\sum_{k=1}^n(|x_k+y_k|-|z_k|)^2}\leq \sqrt{\sum_{k=1}^n(||x_k|-|z_k||+|y_k|)^2}\\
=&\|||\boldsymbol{x}|-|\boldsymbol{z}||+|\boldsymbol{y}|\|_2
\leq \||\boldsymbol{x}|-|\boldsymbol{z}|\|_2+\|\boldsymbol{y}\|_2.
\end{aligned}
\]
This establishes the (\ref{eq:youbian}). It remains to prove inequality (\ref{claim_temp}).
If $\abs{a + b} \geq \abs{c}$, then we have:
\[
||a + b| - |c|| = |a + b| - |c| \leq |a| + |b| - |c| \leq ||a| - |c|| + |b|.
\]
Conversely, if $|a + b| < |c|$, then:
\[
||a + b| - |c|| = |c| - |a + b| \leq |c| - |a| + |b| \leq ||a| - |c|| + |b|.
\]
Thus, in both cases, we arrive at (\ref{claim_temp}).

Based on  (\ref{eq:youbian}), we can further deduce that for all $\vx', \vy',\vz'\in \vF^n$, it obtains
\begin{equation}\label{eq:mid1}
\left\| \abs{\boldsymbol{x}'+\boldsymbol{y}'}-|\boldsymbol{z}'|\right\|_2-\|\boldsymbol{y}'\|_2\leq \left\||\boldsymbol{x}'|-|\boldsymbol{z}'|\right\|_2.
\end{equation} 
For any $\vx,\vy,\vz\in\vF^n$, we set $\vx':=\vx+\vy, \vy':=-\vy$ and $\vz':=\vz$ in (\ref{eq:mid1}), we obtain the left side of 
(\ref{eq:budeng}).
This concludes the proof.
\end{proof}

\section{Proof of Theorem \ref{th:lip}}\label{sec: Gaussian}

We begin by introducing the following theorem, which plays a crucial role in establishing Theorem \ref{th:lip}.
\begin{theorem}(\cite[Theorem 4.1]{XXX})\label{thm: complex_beta}
Let $\boldsymbol{A}\in \vF^{m\times n}$ ($\mathbb{F}\in \{\mathbb{R},\mathbb{C}\}$) be a standard Gaussian random matrix.
For any $0<\delta<0.05$, with probability at least  $1-2\exp(-c\delta^2m)$, the matrix  $\vA\in \vF^{m\times n}$ satisfies the phaseless bi-Lipschitz condition on the set $\vF^n$
with constants $L$ and $U$  such that
\begin{equation}\label{thm: complex_beta:2024-1}
\beta_0^{\vF}\leq U/L\leq \beta_0^{\vF}+\delta
\end{equation}
provided that $m\gtrsim \log(1/\delta)\delta^{-2}n$.  
Here, $c$ denotes a universal positive constant, and $\beta_0^{\vF}$  represents the constant defined in \eqref{eq:mdlower-new}.
\end{theorem}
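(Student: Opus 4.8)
The plan is to prove this as a uniform concentration result for the random map $(\vx,\vy)\mapsto \||\vA\vx|-|\vA\vy|\|_2$, in which the constant $\beta_0^{\vF}$ emerges as the square root of the ratio between the largest and smallest possible values of a single expectation. Write $\boldsymbol a_1,\ldots,\boldsymbol a_m$ for the rows of $\vA$, so that $\||\vA\vx|-|\vA\vy|\|_2^2=\sum_{i=1}^m\big(|\langle \boldsymbol a_i,\vx\rangle|-|\langle \boldsymbol a_i,\vy\rangle|\big)^2$ is a sum of i.i.d.\ terms. The first step is to compute the per-row expectation. For fixed $\vx,\vy$ the pair $(\langle \boldsymbol a,\vx\rangle,\langle \boldsymbol a,\vy\rangle)$ is a (real or complex) jointly Gaussian vector whose law depends only on $\|\vx\|_2$, $\|\vy\|_2$ and $\langle\vx,\vy\rangle$, and a direct moment computation gives
\[
\phi(\vx,\vy):=\mathbb{E}\big(|\langle \boldsymbol a,\vx\rangle|-|\langle\boldsymbol a,\vy\rangle|\big)^2=\|\vx\|_2^2+\|\vy\|_2^2-2\,\mathbb{E}\big|\langle\boldsymbol a,\vx\rangle\big|\big|\langle\boldsymbol a,\vy\rangle\big|.
\]
Normalizing to the unit sphere and writing $\rho$ for the relevant correlation, the closed form of the mixed term (for instance $\tfrac{2}{\pi}\big(\sqrt{1-\rho^2}+\rho\arcsin\rho\big)$ in the real case) lets me evaluate $g(\vx,\vy):=\phi(\vx,\vy)/\dist(\vx,\vy)^2$.

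The crucial analytic fact, and the first technical hurdle, is that $g$ attains its global minimum $M_L$ at orthogonal configurations ($\rho=0$, where $g=1-2/\pi$ in the real case) and its supremum $M_U$ in the near-parallel limit ($\rho\to1$, where $g\to1$), and that the identity $M_U/M_L=(\beta_0^{\vF})^2$ holds exactly; this is precisely what pins down the constant in \eqref{eq:mdlower-new}. I would verify that no intermediate correlation yields a larger ratio by studying the one-variable function $g(\rho)$ on $[0,1)$ (and its complex analogue), checking the endpoint behaviour through an expansion since the quotient is $0/0$ as $\rho\to1$.

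Next I would establish pointwise concentration. For fixed $\vx,\vy$ each summand $\big(|\langle\boldsymbol a_i,\vx\rangle|-|\langle\boldsymbol a_i,\vy\rangle|\big)^2$ is the square of a $\dist(\vx,\vy)$-Lipschitz function of a Gaussian vector, hence sub-exponential at the correct scale, and Bernstein's inequality yields
\[
\mathbb{P}\!\left(\Big|\tfrac1m\||\vA\vx|-|\vA\vy|\|_2^2-\phi(\vx,\vy)\Big|>t\,\dist(\vx,\vy)^2\right)\le 2\exp\!\big(-c\,m\min(t^2,t)\big).
\]
The final and hardest step is to promote this to a uniform bound over all $\vx,\vy\in\vF^n$. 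Using homogeneity and the phase invariance built into $\dist$, I reduce to pairs on the unit sphere and take an $\varepsilon$-net of size $\exp(O(n\log(1/\varepsilon)))$; the map is Lipschitz across net points with constant controlled by $\|\vA\|_{\mathrm{op}}$, which concentrates near $\sqrt m\,(1+\sqrt{n/m})$. Choosing $\varepsilon\asymp\delta$ and $t\asymp\delta$, a union bound over the net combined with Lipschitz interpolation gives, on an event of probability $1-2\exp(-c\delta^2 m)$, that $\big|\tfrac1m\||\vA\vx|-|\vA\vy|\|_2^2-\phi(\vx,\vy)\big|\le O(\delta)\,\dist(\vx,\vy)^2$ simultaneously for all pairs, provided $m\gtrsim \log(1/\delta)\,\delta^{-2}n$. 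Setting $U^2:=m(M_U+O(\delta))$ and $L^2:=m(M_L-O(\delta))$ then delivers both inequalities in \eqref{bi_constraint} and the upper bound $U/L\le\beta_0^{\vF}+\delta$; the matching lower bound $\beta_0^{\vF}\le U/L$ follows by testing the two extremal configurations, for which concentration forces the empirical ratio down to essentially $\sqrt{M_U/M_L}$.

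The main obstacle I anticipate is the uniform step. Because $\dist(\vx,\vy)$ can be arbitrarily small, the \emph{relative} error in the net argument threatens to blow up for nearby pairs, so the small-separation regime (where $g\to1$) must either be handled by a net adapted to the quotient geometry or treated separately, and the Lipschitz constant must be measured in the quotient metric induced by $\dist$ rather than in the ambient norm. Getting the error bookkeeping to close at the multiplicative scale $O(\delta)$, while keeping the net cardinality compatible with the target failure probability $2\exp(-c\delta^2 m)$, is where the bulk of the work lies.
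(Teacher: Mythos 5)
First, a point of comparison: the paper itself does not prove this statement at all --- it is imported verbatim as \cite[Theorem 4.1]{XXX}, so your attempt is measured against that external proof rather than anything in this manuscript. Your skeleton (per-row expectation computation, pointwise Bernstein concentration, net argument) is the natural architecture, and your identification of $\beta_0^{\vF}$ as $\sqrt{M_U/M_L}$ is correct: in the real case $g=1-2/\pi$ at orthogonal equal-norm pairs and $g\to 1$ in the parallel limit, giving the ratio $\pi/(\pi-2)$, with the analogous computation in the complex case. One small but necessary repair: $g(\vx,\vy)$ is invariant only under \emph{joint} scaling, so pairs with $\|\vx\|_2\neq\|\vy\|_2$ do not reduce to the sphere; $g$ is a function of two parameters (correlation $\rho$ and norm ratio), and you must verify that for each fixed $\rho$ the infimum occurs at equal norms (it does, by a one-line monotonicity check in $u=(\|\vx\|_2^2+\|\vy\|_2^2)/(2\|\vx\|_2\|\vy\|_2)$), before your one-variable analysis of $g(\rho)$ is legitimate.

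The genuine gap is the one you flag yourself and then leave open: the uniformity step for nearly coincident pairs. The target inequality is \emph{multiplicative} in $\dist(\vx,\vy)$, while the net-plus-Lipschitz interpolation error is additive at scale $\|\vA\|_{\mathrm{op}}\,\varepsilon\approx\sqrt{m}\,\varepsilon$; since $\dist(\vx,\vy)$ ranges down to $0$, no fixed resolution $\varepsilon\asymp\delta$ closes the bookkeeping, and a net ``adapted to the quotient geometry'' is precisely the thing that needs to be constructed, not assumed. Known resolutions require a substantive additional mechanism: in the real case the sign-pattern decomposition $\||\vA\vx|-|\vA\vy|\|_2^2=\|\vA_I(\vx-\vy)\|_2^2+\|\vA_{I^c}(\vx+\vy)\|_2^2$ underlying the strong RIP of \cite{VX}, and in the complex case a local linearization near the diagonal with row-wise phase alignment together with small-ball estimates controlling the rows where $|\langle\boldsymbol{a}_i,\vx\rangle|$ is small, uniformly in $\vx$. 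None of this appears in the proposal, and since the upper constant $U$ already follows from $\|\vA\|_{\mathrm{op}}$ concentration, the uniform lower bound you have not established is essentially the entire content of the theorem. A secondary slip: your concentration-based argument for $\beta_0^{\vF}\leq U/L$ only yields $\beta_0^{\vF}-o(1)$ for the optimal constants; the exact lower bound is a separate deterministic result in \cite{XXX} (though for the existence form of the statement one may simply decrease $L$). As written, then, the proposal is a correct plan with the hardest step acknowledged but unexecuted, and it does not yet constitute a proof.
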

We shall now proceed with the proof of Theorem \ref{th:lip}.
\begin{proof}[Proof of Theorem \ref{th:lip}]
For any fixed index set $K_0\subseteq\{1,\ldots,n\}$  satisfying $\# K_0 = \lfloor (r+4)k \rfloor$, we define the set $\mathcal{X}_0$
  as follows:
\[
 \mathcal{X}_0:=\{{\vz}\in \mathbb{F}^n\ :\ {\rm supp}(\vz)\subseteq K_0\}.
\]

Applying Theorem \ref{thm: complex_beta} with $\delta = 0.01$, we can conclude that, 
with probability at least $1 - 2\exp(-2c_0m)$ for some universal constant $c_0 > 0$,
 the matrix  $\vA\in \vF^{m\times n}$ satisfies the phaseless bi-Lipschitz condition on the set  $\mathcal{X}_0$
with constants $L, U$  such that
\begin{equation}\label{thm: complex_beta:2024-1}
\beta_0^{\vF}\leq U/L\leq \beta_0^{\vF}+0.01,
\end{equation}
provided that $m\gtrsim (r+4)k$. Consequently, the matrix $\vA\in \mathbb{F}^{m\times n}$ satisfies the phaseless bi-Lipschitz condition on the set
 $\mathcal{X}:=\{\boldsymbol{x}\in \mathbb{F}^n\ :\ \|\vz\|_0\leq (r+4)k\}$ with positive constants $L$ and $U$ satisfying (\ref{thm: complex_beta:2024-1}), with probability at least 
\[
\begin{split}
1-2{n\choose \lfloor(r+4)k\rfloor}\exp(-2c_0m)\geq & 1-2{\exp\left( (r+4)k\log(en /(r+4)k)\right)}\exp(-2c_0m)\\
\geq & 1-\exp(-c_0m).
\end{split}
\]
Here, the last inequality follows from $m\geq C k \log (en/k)$, where $C$ depends on $r$. 
\end{proof}

\section{Proof of Proposition \ref{thm:bad_22}}\label{sec: 22inst_new}
 {
The proof utilizes techniques similar to those employed in Theorem 3.2 and Theorem 5.1 of \cite{best}. For  convenience, we present a restatement of Theorem 5.1 from \cite{best}, adapted to the case where $\mathbb{F} \in \{\mathbb{R}, \mathbb{C}\}$. It is worth noting that the result, originally formulated for the real case, naturally extends to complex-valued scenarios.
\begin{lem}\cite[Theorem 5.1]{best}
\label{lem: lem_temp}
Let $\vA \in \mathbb{F}^{m \times n}$ be a matrix. Define its null space as:
\begin{equation}
\label{eqn: Null_A}
\mathcal{N}(\vA) := \{\boldsymbol{\eta} \in \mathbb{F}^n : \vA\boldsymbol{\eta} = \boldsymbol{0}\}.
\end{equation}
 Suppose that: for all $\boldsymbol{\eta} \in \mathcal{N}(\vA)$ and all subsets $T \subset \{1,\ldots,n\}$ with cardinality $\# T  \leq 2k$, 
\begin{equation}
 \label{eqn: conclusion_eta_temp} 
\|\boldsymbol{\eta}\|_2 \leq C_0\cdot  \|\boldsymbol{\eta}_{T^c}\|_2,
\end{equation}
where $C_0$  is a positive absolute constant. Then we have $m \gtrsim n$.
\end{lem}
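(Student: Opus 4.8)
The plan is to show that the hypothesis forces the null space $\mathcal{N}(\vA)$ to have small dimension, which immediately bounds $m$ from below since $\dim \mathcal{N}(\vA) \ge n - m$. The engine of the argument is to exhibit, inside $\mathcal{N}(\vA)$, a single vector that is sharply concentrated on one coordinate; such a vector cannot satisfy (\ref{eqn: conclusion_eta_temp}) unless the null space is thin.

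Concretely, I would let $P \in \mathbb{F}^{n\times n}$ denote the orthogonal projection onto $\mathcal{N}(\vA)$, so that $P = P^\ast = P^2$ and $\mathrm{tr}(P) = \dim \mathcal{N}(\vA) \ge n - m$. Writing $p_i := P_{ii} = \langle P\boldsymbol{e}_i, \boldsymbol{e}_i\rangle = \|P\boldsymbol{e}_i\|_2^2 \in [0,1]$ for the diagonal entries (where $\boldsymbol{e}_i$ is the $i$-th standard basis vector), we have $\sum_{i=1}^n p_i = \mathrm{tr}(P) \ge n-m$, so by averaging there is an index $j$ with
\[
p_j = \max_i p_i \ge \frac{1}{n}\sum_{i=1}^n p_i \ge \frac{n-m}{n}.
\]
If $m \ge n$ the conclusion is trivial, so assume $m < n$, whence $p_j > 0$. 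Now set $\boldsymbol{\eta} := P\boldsymbol{e}_j$, a nonzero element of $\mathcal{N}(\vA)$. Using $P^2 = P = P^\ast$ one computes $\|\boldsymbol{\eta}\|_2^2 = \langle P\boldsymbol{e}_j, \boldsymbol{e}_j\rangle = p_j$ and $\eta_j = \langle P\boldsymbol{e}_j, \boldsymbol{e}_j\rangle = p_j$, so that $|\eta_j|^2 = p_j^2$; in other words the single coordinate $j$ already captures a fraction $|\eta_j|^2/\|\boldsymbol{\eta}\|_2^2 = p_j$ of the energy of $\boldsymbol{\eta}$.

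Finally I would feed $\boldsymbol{\eta}$ into the hypothesis (\ref{eqn: conclusion_eta_temp}) with the singleton $T = \{j\}$, which is admissible since $\#T = 1 \le 2k$. Because $\|\boldsymbol{\eta}_{T^c}\|_2^2 = \|\boldsymbol{\eta}\|_2^2 - |\eta_j|^2 = p_j - p_j^2$, the bound $\|\boldsymbol{\eta}\|_2^2 \le C_0^2 \|\boldsymbol{\eta}_{T^c}\|_2^2$ becomes $p_j \le C_0^2 p_j(1-p_j)$; dividing by $p_j > 0$ gives $p_j \le 1 - 1/C_0^2$. Combining with $p_j \ge (n-m)/n$ yields $(n-m)/n \le 1 - 1/C_0^2$, i.e.\ $m \ge n/C_0^2$, which is the desired $m \gtrsim n$ with absolute constant $1/C_0^2$. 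The same computation is valid over $\mathbb{F} = \mathbb{C}$ with the Hermitian inner product, since the diagonal entries $p_i$ of a Hermitian projection are real and nonnegative.

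I do not expect a serious technical obstacle; the whole difficulty is conceptual, namely spotting that $P\boldsymbol{e}_j$ for the coordinate $j$ of largest projection weight is automatically a concentrated null vector whose concentration equals $p_j$, and that the trivial averaging bound $\max_i p_i \ge \mathrm{tr}(P)/n$ already converts a high-dimensional null space into a vector violating (\ref{eqn: conclusion_eta_temp}). It is worth noting that the argument uses the hypothesis only for $T$ a single index, so the full range $\#T \le 2k$ is not needed; and unlike a coordinate-subspace intersection argument (which would only yield $m \ge 2k$), this projection–trace approach captures the quantitative spread and delivers the linear-in-$n$ bound.
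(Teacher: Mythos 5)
Your proof is correct, and it is essentially the same argument as the one the paper relies on: the paper itself gives no proof, citing Theorem 5.1 of \cite{best}, and your trace-of-projection computation — diagonal entries $p_j=\|P\boldsymbol{e}_j\|_2^2$ of the orthogonal projection $P$ onto $\mathcal{N}(\vA)$, the identity $\eta_j=\|\boldsymbol{\eta}\|_2^2=p_j$ for $\boldsymbol{\eta}=P\boldsymbol{e}_j$, and the resulting bound $p_j\le 1-1/C_0^2$ yielding $m\ge n/C_0^2$ — is precisely the proof in that reference, up to the cosmetic difference that you apply the bound to the maximal $p_j$ obtained by averaging, rather than summing $p_j\le 1-1/C_0^2$ over all $j$ against $\operatorname{tr}(P)\ge n-m$. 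Your closing remarks that only singleton sets $T=\{j\}$ are needed and that the Hermitian case goes through verbatim also substantiate the paper's unproved assertion that the real-valued result of \cite{best} extends to $\mathbb{F}=\mathbb{C}$.
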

\begin{proof}[Proof of Proposition \ref{thm:bad_22}]
Proving  (\ref{eqn: conclusion_eta_temp}) of Lemma \ref{lem: lem_temp} is sufficient to establish the desired conclusion.
 For any  $\veta\in \mathcal{N}(\vA)$ defined in (\ref{eqn: Null_A}), let $T_1$ be the index set of $\veta$'s $k$ largest magnitude elements, and $T_2$  be that of $\veta_{T_1^c}$'s $k$ largest magnitude elements. For convenience, define $\veta_1:=\veta_{T_1}$, $\veta_2:=\veta_{T_2}$, and $\veta_3:=\veta-\veta_{1}-\veta_{2}$.\\
\indent Since $\veta\in \mathcal{N}(\vA)$ and $\veta=\veta_1+\veta_2+\veta_3$, we have $\vA(-\veta_1)=\vA(\veta_2+\veta_3)$, which implies  $|\vA(\veta_1)|=|\vA(\veta_2+\veta_3)|$. Thus, according to phaseless $(2,2)$-instance optimality in  
(\ref{eqn: uniform_22_upper}), we have
\[
  \mathrm{dist}(\Delta(|\vA(\veta_2+\veta_3)|),\veta_1)_2=  \mathrm{dist}(\Delta(|\vA(\veta_1)|),\veta_1)_2\leq c_0\cdot  \sigma_k(\veta_1)_2=0.
\]
Take $c_{\veta}\in \mathbb{F}$ such that $|c_{\veta}|=1$ and 
$
c_{\veta}\cdot \veta_1=\Delta(|\vA(\veta_2+\veta_3)|).
$
Then, also based on $(2,2)$-instance optimality, we have 
\[
\begin{split}
\|\veta\|_2=&\|\veta_1+\veta_2+\veta_3\|_2\overset{(a)}=\dist(c_{\veta}\cdot \veta_1,\veta_2+\veta_3)=\text{dist}(\Delta(|\vA(\veta_2+\veta_3)|),\veta_2+\veta_3)\\
\leq & c_0\cdot  \sigma_k(\veta_2+\veta_3)_2=c_0\cdot  \|\veta_3\|_2\leq c_0\cdot \sigma_{2k}(\veta)_2\leq c_0\cdot \|\veta_{T^c}\|_2,
\end{split}
\]
for all $T \subset \{1,\ldots,n\}$ with $\#T \leq 2k$. Here ($a$) follows from the disjoint support property of $\veta_2+\veta_3$ and $\veta_1$. Therefore, (\ref{eqn: conclusion_eta_temp}) holds with $C_0:=c_0$ and the proof is completed. 
\end{proof} }

\section{Proof of Theorem \ref{th:22}}\label{sec: 22inst2}
We now turn our attention to the  phaseless $(2,2)$-instance optimality in probability. 
Lemmas \ref{Gaussian_LQ} and \ref{construction of x} are derived from Lemma 2.1 and Theorem 4.1 in \cite{L1222}, respectively. {Although Lemma 2.1 and Theorem 4.1 in \cite{L1222} are originally formulated for the real-valued Bernoulli case, they can be readily extended to standard Gaussian measurements for $\mathbb{F} \in \{\mathbb{R}, \mathbb{C}\}$, as outlined in Section 5 of \cite{L1222}. For the sake of brevity, we omit the proof of this extension.}

\begin{lem}\cite[Lemma 2.1]{L1222}\label{Gaussian_LQ}
Suppose that $\vA\in \mathbb{F}^{m\times n}$ is a standard Gaussian random matrix. For each fixed $\vz\in \mathbb{F}^n$, then we have 
\begin{equation}\label{z_temp}
\frac{1}{\sqrt{m}}\|\vA\vz\|_2\leq \sqrt{\frac{3}{2}}\|\vz\|_2\qquad \text{and}\qquad \frac{1}{\sqrt{m}}\|\vA\vz\|_\infty\leq \frac{1}{\sqrt{\log(en/m)}}\|\vz\|_2
\end{equation}
with probability at least $1-2\exp(-\widetilde{c}_0m)-2m\exp(-\widetilde{c}_1m/\log(en/m))$. Here $\widetilde{c}_0$ and $\widetilde{c}_1$ are universal positive constants. 
\end{lem}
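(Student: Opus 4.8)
The plan is to establish both inequalities in \eqref{z_temp} as pointwise concentration statements for the fixed vector $\vz$, so that no covering argument over the sphere is required. By homogeneity of each inequality in $\vz$, I may assume $\|\vz\|_2 = 1$. Writing $\va_1,\ldots,\va_m$ for the rows of $\vA$ and setting $g_i := \langle \va_i, \vz\rangle$, the coordinates of $\vA\vz$ are the independent scalars $g_1,\ldots,g_m$. For $\mathbb{F}=\mathbb{R}$ each $g_i \sim \mathcal{N}(0,1)$, while for $\mathbb{F}=\mathbb{C}$ the real and imaginary parts of $g_i$ are independent $\mathcal{N}(0,1/2)$ variables; in both cases $\mathbb{E}|g_i|^2 = 1$, so $\|\vA\vz\|_2^2 = \sum_{i=1}^m |g_i|^2$ has mean $m$, and the task reduces to two standard tail estimates plus a union bound.

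For the $\ell_2$ bound I would use upper-tail concentration of a (scaled) chi-squared variable. In the real case $\|\vA\vz\|_2^2 \sim \chi^2_m$, and in the complex case $\|\vA\vz\|_2^2 \sim \tfrac{1}{2}\chi^2_{2m}$; both are sums of $\Theta(m)$ i.i.d.\ sub-exponential terms of unit mean. Applying the standard one-sided bound $\mathbb{P}(\chi^2_\nu \geq \nu(1+t)) \leq \exp(-\nu(t-\log(1+t))/2)$ with $t = 1/2$ and $\nu \in \{m,2m\}$ yields $\mathbb{P}(\|\vA\vz\|_2^2 \geq \tfrac{3}{2}m) \leq 2\exp(-\widetilde{c}_0 m)$ for a universal $\widetilde{c}_0 > 0$; taking square roots and dividing by $\sqrt{m}$ gives the first inequality in \eqref{z_temp}.

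For the $\ell_\infty$ bound I would combine the sub-Gaussian tail of a single coordinate with a union bound over the $m$ rows. Each $g_i$ satisfies $\mathbb{P}(|g_i| \geq t) \leq 2\exp(-t^2/2)$ for $\mathbb{F}=\mathbb{R}$, and $\mathbb{P}(|g_i| \geq t) = \exp(-t^2)$ for $\mathbb{F}=\mathbb{C}$ (since there $|g_i|^2$ is exponential with mean $1$). Choosing the threshold $t = \sqrt{m/\log(en/m)}$ and taking a union bound over $i=1,\ldots,m$ gives $\mathbb{P}\bigl(\|\vA\vz\|_\infty \geq \sqrt{m/\log(en/m)}\bigr) \leq 2m\exp(-\widetilde{c}_1 m/\log(en/m))$, which is exactly the second inequality after dividing by $\sqrt{m}$. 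A final union bound over the two failure events produces the stated probability $1 - 2\exp(-\widetilde{c}_0 m) - 2m\exp(-\widetilde{c}_1 m/\log(en/m))$.

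The argument is essentially routine; the only points needing care are (i) handling the real and complex cases uniformly, so that in each case the coordinates reduce to a chi-squared or exponential tail with the correct normalization $\mathbb{E}|g_i|^2 = 1$, and (ii) checking that the threshold $t = \sqrt{m/\log(en/m)}$ is large enough for the factor $m$ from the union bound to be absorbed, i.e.\ that $m/\log(en/m)$ dominates $\log m$ in the relevant regime $m \gtrsim k\log(en/k)$, so the claimed probability is genuinely close to $1$. No net over the sphere is needed precisely because the statement is non-uniform (for each fixed $\vz$), which is what keeps the proof elementary.
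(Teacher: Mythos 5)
Your proof is correct, and it is worth noting that the paper itself offers no proof of this lemma at all: it cites \cite[Lemma 2.1]{L1222} (DeVore--Petrova--Wojtaszczyk), remarks that the result there is stated for real Bernoulli matrices, and explicitly omits the "routine" extension to standard Gaussian matrices over $\mathbb{F}\in\{\mathbb{R},\mathbb{C}\}$. Your argument supplies exactly that omitted content, and it holds up: by homogeneity and rotation invariance the coordinates of $\vA\vz$ reduce to i.i.d.\ scalars $g_i$ with $\mathbb{E}|g_i|^2=1$ (real: $\mathcal{N}(0,1)$; complex: $|g_i|^2\sim\mathrm{Exp}(1)$), the $\ell_2$ bound follows from the standard chi-squared Chernoff bound $\mathbb{P}(\chi^2_\nu\geq\nu(1+t))\leq\exp(-\nu(t-\log(1+t))/2)$ at $t=1/2$ (applied with $\nu=m$ or $\nu=2m$), and the $\ell_\infty$ bound follows from the single-coordinate sub-Gaussian tail at threshold $t=\sqrt{m/\log(en/m)}$ plus a union bound over the $m$ coordinates, which is precisely where the factor $2m$ in the stated failure probability comes from. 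Your point (ii) about $m/\log(en/m)$ dominating $\log m$ is a usage concern (when the bound is nonvacuous) rather than a correctness requirement, since the lemma's probability statement already carries the $2m$ prefactor; it does no harm, but it is not needed. The only thing your write-up buys beyond the paper is self-containedness --- which, given that the paper leans on a reference proved in a different setting (Bernoulli, real case), is a genuine improvement in rigor for the complex case the paper actually needs.
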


\begin{lem}\cite[Theorem 4.1]{L1222}\label{construction of x}
Assume that $m$ and $n$ are positive integers with $n \geq (\log 6)^2 m$. Let $\vA \in \mathbb{F}^{m \times n}$ be a standard Gaussian random matrix. For each vector $\vy\in \mathbb{F}^m$ satisfying $\|\vy\|_\infty \leq \frac{1}{\sqrt{m}}$ and $\|\vy\|_2 \leq \sqrt{\frac{\log(en/m)}{m}}$, with probability  at least $1 - \widetilde{C}_1 \exp(-\widetilde{c}_2 m) - \exp(-\sqrt{mn})$, there exists a vector $\widetilde{\vz} \in \mathbb{F}^n$ such that $\vy = \frac{1}{\sqrt{m}} \vA \widetilde{\vz}$, and the conditions $\|\widetilde{\vz}\|_1 \leq C_1$ and $\|\widetilde{\vz}\|_2 \leq C_2 \frac{1}{\sqrt{k}}$ hold for all $k \leq \frac{a m}{\log(en/m)}$. Here, $a$, $C_1$, $C_2$, $\widetilde{C}_1$, and $\widetilde{c}_2$ are universal positive constants.

\end{lem}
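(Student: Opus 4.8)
The statement is a simultaneous $(\ell_1,\ell_2)$-quotient property for the renormalized Gaussian matrix $\boldsymbol B:=\frac{1}{\sqrt m}\vA$: every admissible target $\vy$ obeying the natural size constraints $\|\vy\|_\infty\le 1/\sqrt m$ and $\|\vy\|_2\le\sqrt{\log(en/m)/m}$ must lie in the image under $\boldsymbol B$ of a set of controlled $\ell_1$- and $\ell_2$-radius. My plan is to produce the preimage $\widetilde{\vz}$ \emph{constructively}, as the limit of a greedy residual-reduction iteration whose residual contracts geometrically, and then to read off both norm bounds by summing the geometric series of the increments. Since $m\le n$, the matrix $\boldsymbol B$ is surjective almost surely, so exactness $\boldsymbol B\widetilde{\vz}=\vy$ will follow once the residuals tend to zero.

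Concretely, set $s:=\lfloor am/\log(en/m)\rfloor$, start from $\boldsymbol r^{(0)}:=\vy$, and at step $t$ form the correlation vector $\boldsymbol B^{*}\boldsymbol r^{(t)}$, let $S_t$ index its $s$ largest entries in magnitude, and take $\vz^{(t)}$ supported on $S_t$ solving $\boldsymbol B_{S_t}\vz^{(t)}=\boldsymbol r^{(t)}$ in the least-squares sense, updating $\boldsymbol r^{(t+1)}:=\boldsymbol r^{(t)}-\boldsymbol B\vz^{(t)}$. The heart of the matter is a one-step estimate: for a \emph{fixed} residual $\boldsymbol r$, with high probability
\[
\|\boldsymbol r-\boldsymbol B\vz\|_2\le\tfrac12\|\boldsymbol r\|_2,\qquad \|\vz\|_2\le c\,\|\boldsymbol r\|_2,\qquad \|\vz\|_1\le c\sqrt s\,\|\boldsymbol r\|_2 .
\]
The contraction is the delicate ingredient and rests on two Gaussian facts at sparsity $s\asymp m/\log(en/m)$: (i) the order statistics of the $n$ i.i.d.\ inner products give $\sum_{j\in S_t}|(\boldsymbol B^{*}\boldsymbol r)_j|^2\gtrsim\|\boldsymbol r\|_2^2$, so the top-$s$ columns collectively capture a constant fraction of $\|\boldsymbol r\|_2^2$; and (ii) $\boldsymbol B_{S_t}$ is a near-isometry on $S_t$-sparse vectors (the restricted isometry property \eqref{eq:rip} at sparsity $s$, valid once $m\gtrsim s\log(en/s)$), which lets the correlations be converted into coefficients $\vz$ that remove that fraction while keeping $\|\vz\|_2\lesssim\|\boldsymbol r\|_2$; the $\ell_1$ bound is then just $\|\vz\|_1\le\sqrt s\,\|\vz\|_2$. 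Choosing the absolute constant $a$ large enough forces the captured fraction above $3/4$, which yields the factor $1/2$.

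Iterating gives $\|\boldsymbol r^{(t)}\|_2\le 2^{-t}\|\vy\|_2$, hence $\|\vz^{(t)}\|_2\le c\,2^{-t}\|\vy\|_2$ and $\|\vz^{(t)}\|_1\le c\sqrt s\,2^{-t}\|\vy\|_2$; summing the geometric series gives
\[
\|\widetilde{\vz}\|_2\le 2c\,\|\vy\|_2,\qquad \|\widetilde{\vz}\|_1\le 2c\sqrt s\,\|\vy\|_2 .
\]
Inserting the hypothesis $\|\vy\|_2\le\sqrt{\log(en/m)/m}$ together with $s\asymp am/\log(en/m)$ makes $\sqrt s\,\|\vy\|_2\lesssim 1$, so $\|\widetilde{\vz}\|_1\le C_1$; and since $1/\sqrt k\ge\sqrt{\log(en/m)/(am)}$ for every $k\le am/\log(en/m)$, the bound $\|\widetilde{\vz}\|_2\le 2c\,\|\vy\|_2$ delivers $\|\widetilde{\vz}\|_2\le C_2/\sqrt k$ for all such $k$, with $C_1,C_2$ absolute.

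For $\mathbb F=\mathbb C$ the scheme runs verbatim with $\boldsymbol B^{*}$ the conjugate transpose and $|\langle\va_j,\boldsymbol r\rangle|^2$ now a scaled $\chi^2_2$ variable; only the concentration constants change, so the structure of every bound is preserved, which is exactly the extension the paper defers to Section~5 of \cite{L1222}. The probability accounting superposes the RIP event (failure $\le\widetilde C_1\exp(-\widetilde c_2 m)$) with the correlation-energy event. The main obstacle I foresee is that the residuals $\boldsymbol r^{(t)}$ are adapted to $\vA$, so the fixed-vector one-step estimate cannot be invoked directly: it must be made to hold \emph{simultaneously} over the entire data-dependent family of residuals the algorithm can generate. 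Upgrading the one-step estimate to this uniform form, either by a covering argument over the low-complexity residual set or by replacing the adaptive supports $S_t$ with supports drawn from a fixed partition of the $n$ columns, is the technical core, and is the natural source of both the hypothesis $n\ge(\log 6)^2 m$ and the $\exp(-\sqrt{mn})$ term in the stated probability.
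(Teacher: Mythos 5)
Your proposal cannot be compared against a proof in the paper, because the paper contains none: this lemma is imported verbatim from DeVore--Petrova--Wojtaszczyk \cite{L1222} (Theorem 4.1 there), and even the Bernoulli-to-Gaussian/complex extension is explicitly omitted with a pointer to Section 5 of \cite{L1222}. Measured against the cited source, your architecture is the right one: a one-step correction lemma producing a sparse, norm-controlled $\vz$ that shrinks the residual by a fixed factor, iterated to geometric decay, with the two norm bounds read off by summing the series, and with the uniformization over $\vA$-dependent residuals as the crux --- you correctly locate the hypotheses $n \geq (\log 6)^2 m$ and the $\exp(-\sqrt{mn})$ term there.

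Nevertheless there are two genuine gaps. First, the quantitative core of your one-step estimate is self-contradictory as written. RIP at sparsity $s = \lfloor am/\log(en/m)\rfloor$ forces $m \gtrsim s\log(en/s)$, i.e.\ $a \lesssim \delta^2$, while the Gaussian order statistics you invoke give captured energy $\sum_{j \in S_t}\lvert(\boldsymbol{B}^{*}\boldsymbol{r})_j\rvert^2 \approx 2s\log(en/s)/m \approx 2a\,\|\boldsymbol{r}\|_2^2$, and in any case the capture is capped by $\sigma_{\max}(\boldsymbol{B}_{S_t})^2 \leq (1+\delta)^2$. So ``choosing $a$ large enough'' to push the captured fraction above $3/4$ is impossible; the consistent regime is a small constant $a$, per-step contraction $\sqrt{1-ca}$ rather than $1/2$, and constants degraded by a factor $\asymp 1/a$ after resumming --- repairable, but the step as stated fails. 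Second, the uniformity upgrade, which you honestly flag, is left entirely open, and it is exactly the content of the cited theorem. A telling symptom: your argument never uses the hypothesis $\|\vy\|_\infty \leq 1/\sqrt{m}$. By rotational invariance of the Gaussian ensemble your fixed-residual estimate is direction-free, so the flatness constraint can only enter through the entropy of the admissible set in the uniformity step; a net over the whole sphere is too large (size $e^{Cm}$ against a one-step failure probability of order $e^{-cam}$ with $a$ small), which is precisely why the $\ell_\infty$ restriction is in the statement. Moreover, the residuals $(I - P_{S_t})\boldsymbol{r}^{(t)}$ need not inherit the $\ell_\infty$ bound, so a net over the admissible set does not automatically cover later iterations, and the fresh-blocks alternative needs a termination device (e.g.\ a final exact solve on an $m \times O(m)$ block with controlled smallest singular value), since exactness $\vy = \frac{1}{\sqrt{m}}\vA\widetilde{\vz}$ cannot be reached by infinitely many corrections drawn from finitely many disjoint blocks. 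Until those points are supplied, the proposal is a plausible reconstruction of the strategy of \cite{L1222}, not a proof.
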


\begin{proof}[Proof of Theorem \ref{th:22}]
Assume that $C \geq \max\{1, C_0, 1/a\}$ is a universal constant, where $C_0$ and $a$ are universal positive constants derived from Corollary \ref{(2,1) and (1,1)} and Lemma \ref{construction of x}, respectively.
Given the condition that $k$ satisfying $Ck\log(en/k) \leq m$, we shall further partition the proof into two cases: Case 1: $k > \frac{en}{16C^2}$,  Case 2: $k \leq \frac{en}{16C^2}$.

{\textbf{{Case 1: }$k > \frac{en}{16C^2}$}}. Since $C_0 k \log(en/k) \leq C k \log(en/k) \leq m$, we can directly invoke Corollary \ref{(2,1) and (1,1)} to conclude that, with probability at least $1 - \exp(-c_0 m)$ for some universal positive constant $c_0$:
\[
\text{dist}(\Delta_{1,0}(|\vA \vx_0|), \vx_0) \lesssim \frac{\sigma_{k}(\vx_0)_1}{\sqrt{k}} {<} \frac{\sqrt{n} \sigma_{k}(\vx_0)_2}{\sqrt{en/(16C^2)}} \lesssim \sigma_{k}(\vx_0)_2.
\]

{\textbf{{Case 2: }$k \leq \frac{en}{16C^2}$}}. Let $k_0$ defined as 
\begin{equation}\label{k_0}
k_0:=\max\left\{s\in {\mathbb Z} \ :\ 1\leq s \leq \frac{en}{16C^2}, \quad \text{and}\quad C s \log\left({en}/{s}\right) \leq m\right\}.
\end{equation}
 It directly implies that $k_0\leq m$ and $k\leq k_0$. Denote  $T_0$ as the index set that contains the largest $k_0$ elements of $\vx_0$  in magnitude. If $(\vx_0)_{T_0^c} = \boldsymbol{0}$, based on $C_0 k_0 \log(en/k_0) \leq C k_0 \log(en/k_0) \leq m$, we can invoke Corollary \ref{(2,1) and (1,1)} to demonstrate that, with probability at least $1 - \exp(-c_0 m)$:
\[\text{dist}(\Delta_{1,0}(|\vA \vx_0|), \vx_0) \lesssim \frac{\sigma_{k_0}(\vx_0)_1}{\sqrt{k_0}} = 0 \leq \sigma_{k}(\vx_0)_2.\]
Next, we consider the scenario where $(\vx_0)_{T_0^c} \neq \boldsymbol{0}$. The subsequent proof will be divided into two subcases: $n \leq (\log 6)^2 m$ and $n \geq (\log 6)^2 m$.

{\textbf{{Subcase i: }$k \leq \frac{en}{16C^2}$ and ${n \leq (\log 6)^2 m}$}}.  Let $\alpha_0{<\frac{e}{16C^2}}$ be a sufficiently small universal positive constant such that
$C \cdot \alpha_0 \cdot {\log(e/\alpha_0)}\leq 1/{(\log 6)^2} .$
It then follows that
$C \cdot (\alpha_0 n) \cdot \log\left(\frac{en}{\alpha_0 n}\right) \leq \frac{n}{(\log 6)^2} \leq m,$
which consequently implies that $k_0 \geq \alpha_0 n$ by the definition of $k_0$ in (\ref{k_0}). Given that $C_0 k_0 \log\left(\frac{en}{k_0}\right) \leq C k_0 \log\left(\frac{en}{k_0}\right) \leq m$, we can also apply Corollary \ref{(2,1) and (1,1)} to ascertain that, with  probability  at least $1 - \exp(-c_0 m)$:
\[\text{dist}(\Delta_{1,0}(|\vA \vx_0|), \vx_0) \lesssim \frac{\sigma_{k_0}(\vx_0)_1}{\sqrt{k_0}} \overset{(a)}{\leq} \frac{\sqrt{n} \sigma_{k_0}(\vx_0)_2}{\sqrt{\alpha_0 n}} \lesssim \sigma_{k_0}(\vx_0)_2 \leq \sigma_{k}(\vx_0)_2,\]
 as $k \leq k_0$. Here, $(a)$ follows from the condition $k_0 \geq \alpha_0 n$.

{\textbf{{Subcase ii: }$k \leq \frac{en}{16C^2}$ and ${{n \geq (\log 6)^2 m}}$}}.
We claim that  there exists $\widetilde{\vx}\in \mathbb{F}^n$ such that 
\begin{equation}\label{key_temp}
\small
\boldsymbol{A}(\vx_0)_{T_0^c}=\boldsymbol{A}\widetilde{\vx},\, \|\widetilde{\vx}\|_1\leq C_1\sqrt{\frac{3m}{2\log(en/m)}}{{\|(\vx_0)_{T_0^c}\|_2}},\text {and } \|\widetilde{\vx}\|_2\leq C_2\sqrt{\frac{3m}{2k_0\log(en/m)}}{{\|(\vx_0)_{T_0^c}\|_2}},
\end{equation}
where $C_1$ and $C_2$ are  universal positive constants from Lemma \ref{construction of x}. 
Based on Corollary \ref{(2,1) and (1,1)}  with  $|\vA \vx_0|=|\vA( (\vx_0)_{T_0}+\widetilde{\vx})|$,  we have 
\[
\text{dist}(\Delta_{1,0}(|\vA\vx_0|), (\vx_0)_{T_0}+\widetilde{\vx})\lesssim \frac{\sigma_{k_0}( (\vx_0)_{T_0}+\widetilde{\vx})_1}{\sqrt{k}_0}\leq \frac{\|\widetilde{\vx}\|_1}{\sqrt{k_0}}\overset{(c)}\lesssim \sqrt{\frac{m}{k_0\log(en/m)}}{{\|(\vx_0)_{T_0^c}\|_2}}.
\]
Here $(c)$ is derived from (\ref{key_temp}). Therefore, since $k\leq k_0$, it obtains that 
\[
\begin{split}
\text{dist}(\Delta_{1,0}(|\vA\vx_0|), \vx_0)
\leq & \text{dist}(\Delta_{1,0}(|\vA\vx_0|), (\vx_0)_{T_0}+\widetilde{\vx})+\|(\vx_0)_{T_0}-\vx_0\|_2+\|\widetilde{\vx}\|_2\\
\lesssim &  \left( \sqrt{\frac{m}{k_0\log(en/m)}}+1\right)\sigma_{k_0}(\vx_0)_2\lesssim \sigma_{k_0}(\vx_0)_2\leq \sigma_{k}(\vx_0)_2.
\end{split}
\]
The second line follows from (\ref{key_temp}) and  the inequality 
\begin{equation}\label{eqn: k_0}
{
m\lesssim k_0 \log\left({en}/{m}\right),}
\end{equation} the proof of which will be deferred until the end of the argument. Then  the proof is complete.

We now proceed {to prove (\ref{key_temp})} and (\ref{eqn: k_0}).

Prior to the construction of $\widetilde{\vx}$ to meet (\ref{key_temp}), we define $\vz$ as
\[
\vz: = \frac{\sqrt{2}}{\sqrt{3} \|(\vx_0)_{T_0^c}\|_2} \cdot \sqrt{\frac{\log(en/m)}{m}} \cdot (\vx_0)_{T_0^c},
\]
and let $\vy: = \frac{1}{\sqrt{m}} \vA \vz$. The construction of $\widetilde{\vx}$ in  will be derived from $\vy$. To begin with, we shall explore the properties of $\vy$.
 According to (\ref{z_temp}) in Lemma \ref{Gaussian_LQ}, with probability at least $1-2\exp(-\widetilde{c}_0m)-2m\exp(-\widetilde{c}_1m/\log(en/m))$,  we can readily conclude that
  \[
  \|\vy\|_\infty\leq 1/\sqrt{m}\quad \text{and}\quad\|\vy\|_2\leq \sqrt{\log(en/m)/m}.
  \] 
  Here $\widetilde{c}_0$ and $\widetilde{c}_1$ are universal positive constants. Furthermore, we can obtain the following result:
\[
k_0 \log\left(\frac{en}{m}\right) \leq k_0 \log\left(\frac{en}{k_0}\right) \leq \frac{m}{C} \leq am,
\]
given that $k_0 \leq m$ and $C \geq{1}/{a}$. Consequently, the conditions stipulated in Lemma \ref{construction of x} are satisfied, allowing us to construct $\widetilde{\vz}$ such that
$\vy = \frac{1}{\sqrt{m}} \vA \widetilde{\vz}$ with
\begin{equation}\label{C_1 and C_2}
\|\widetilde{\vz}\|_1\leq C_1\quad \text{and}\quad \|\widetilde{\vz}\|_2\leq C_2\frac{1}{\sqrt{k_0}},
\end{equation}
with probability  at least $1 - \widetilde{C}_1 \exp(-\widetilde{c}_2 m) - \exp(-\sqrt{mn})$.  Here $C_1$, $C_2$, $\widetilde{C}_1$, and $\widetilde{c}_2$ are also universal positive constants. We can now define $\widetilde{\vx}$ as follows:
\[
\widetilde{\vx} := \sqrt{\frac{3m}{2\log(en/m)}} \cdot {\|(\vx_0)_{T_0^c}\|_2}\cdot \widetilde{\vz}.
\]
By invoking the results from (\ref{C_1 and C_2}) and $\boldsymbol{A}(\vx_0)_{T_0^c}=\boldsymbol{A}\widetilde{\vx}$, we can readily derive the conclusion presented in (\ref{key_temp}).

The sole remaining task is to establish the validity of (\ref{eqn: k_0}). If $k_0=\lfloor \frac{en}{16C^2}\rfloor$, given that $n \geq (\log 6)^2\cdot m$, we derive the following inequality:
\[
\begin{aligned}
k_0\log(en/m)\geq k_0\log(e\cdot (\log 6)^2)\geq \frac{en}{32C^2}\cdot \log(e\cdot(\log 6)^2)\geq \frac{e(\log 6)^2\log(e\cdot (\log 6)^2)}{32C^2}m.
\end{aligned}
\] 
Thus, we can conclude that (\ref{eqn: k_0}) holds true. We will now consider the case where $k_0 < \lfloor \frac{en}{16C^2} \rfloor$.  {In this context, $s_0 := k_0 + 1$ fails to satisfy the condition $Cs_0 \log(en/s_0) \leq m$, since $k_0$ is the largest integer $s$ such that both $s \leq \frac{en}{16C^2}$ and $Cs \log(en/s) \leq m$ hold true. } 
Then we can obtain the following inequality:
\[
m \leq C(k_0 + 1) \log\left({en}/{(k_0 + 1)}\right),
\]
which consequently leads to the result:
\[m \leq 2C k_0 \log\left({en}/{k_0}\right).\]
We can then deduce that
\[
\begin{aligned}
\frac{k_0\log(en/m)}{m}&\geq \frac{\log(en/(2Ck_0))-\log(\log(en/k_0))}{2C\log(en/k_0)} \overset{(d)}\geq  \frac{1}{2C}\cdot (\frac{1}{2}-0.37)\geq \frac{0.05}{C},
\end{aligned}
\]
thereby leading us to the conclusion stated in (\ref{eqn: k_0}). Here $(d)$ follows from the relationship: {
\begin{equation}\label{two_ineq}
\log(en/(2Ck_0))/\log(en/k_0)\geq 1/2\quad \text{and}\quad  \log(\log(en/k_0))/\log(en/k_0)\leq 0.37.
 \end{equation}}
The first inequality in (\ref{two_ineq}) is derived from $(en/(2Ck_0))^2 \geq en/k_0$, which holds because $k_0 < \frac{en}{16C^2} \leq \frac{en}{4C^2}$. The second inequality in (\ref{two_ineq}) follows from $k_0 < \frac{en}{16C^2} \leq \frac{en}{16}$ and 
\[
 \log(\log(en/k_0))/\log(en/k_0)\leq \log(\log(16))/\log(16)\leq 0.37.
\]
\end{proof}

\end{document}